\documentclass[12pt]{article}
\usepackage{fullpage}
\usepackage{epsf,epsfig,amsfonts,amsgen,amsmath,amstext,amsbsy,amsopn,amsthm,amssymb,epstopdf,tikz,mathrsfs}
\usepackage{comment}
\usepackage{ebezier,eepic,mathtools,dsfont,lmodern}
\usepackage{enumitem}
\usepackage{textcomp}
\usepackage{color,colordvi}
\usepackage{multirow}
\usepackage{url}
\usepackage{graphicx}
\usepackage{epsf}
\usepackage[format=hang, margin=10pt]{caption}
\usepackage{lmodern}
\usepackage{cite}
\newtheorem{theorem}{Theorem}[section]
\newtheorem{proposition}[theorem]{Proposition}
\newtheorem{corollary}[theorem]{Corollary}
\newtheorem{conjecture}[theorem]{Conjecture}
\newtheorem{lemma}[theorem]{Lemma}

\theoremstyle{definition}
\newtheorem{definition}[theorem]{Definition}
\newtheorem{example}[theorem]{Example}
\newtheorem{remark}[theorem]{Remark}

\newcommand{\cS}{\mathcal S}
\newcommand{\intervaltwo}[2]{[#1,#2]_2}

\begin{document}

\title{Generalized Duke's theorem for signed Graphs}

\author{Yichao Chen\thanks{School of Mathematics, SuZhou University of Science and Technology, Suzhou 215009, P.R. China. Email: chengraph@163.com.}\,
\qquad
Yan Yang\thanks{School of Mathematics and KL-AAGDM, Tianjin University, Tianjin 300354, P.R. China. Email: yanyang@tju.edu.cn.} \,}

\date{}

\maketitle

 \begin{abstract}
Duke's interpolation theorem states that the orientable genera of a connected graph form an integer interval, and Stahl established the corresponding result for nonorientable embeddings. In 1991, \v{S}ir\'a\v{n} showed that this interpolation property fails for signed graph embeddings: the Euler-genus spectrum of a signed graph may contain gaps. He subsequently asked whether all such gaps must occur at the lower end of the spectrum.

In this paper, we establish a   characterization of the Euler-genus spectrum of a connected signed graph. We prove that, for each parity class, the Euler genera form a step-two interval. Moreover, whenever both parity classes are nonempty, their maximum elements differ by one. As an  consequence, if two consecutive integers $k$ and $k+1$ belong to the Euler-genus spectrum, then every integer from $k$ to the maximum Euler-genus also belongs to the spectrum, thereby answering \v{S}ir\'a\v{n}'s question affirmatively. Our proof uses the pre-signed graph representation of signed embeddings together with ordered adjacent-exchange operations and a matching interpretation of face numbers.

\medskip
\noindent {\bf Keywords:} Signed graph; pre-signed graph; Euler-genus spectrum; adjacent-exchange operations; alternating matching graph.

\smallskip
\noindent {\bf Mathematics Subject Classification (2020):} 05C10, 05C22.
\end{abstract}

\section{Introduction}
The central problem in topological graph theory is to embed a given graph $G$ on surfaces. A {\it surface} is a compact 2-dimensional manifold without boundary. All surfaces can be obtained from the sphere $\mathbb{S}_0$ by adding handles and crosscaps. If we add $g$ handles to $\mathbb{S}_0$, then we get the orientable surface of genus $g$ $(g\geq 0)$, denoted by $\mathbb{S}_g$. If we add $g$ crosscaps to $\mathbb{S}_0$, then we get the nonorientable surface of genus $g$ $(g\geq 1)$, denoted by $\mathbb{N}_g$. An {\it embedding} of a graph $G$ on a surface $S$ is a homeomorphism $\rho: G\rightarrow S$ of $G$ on $S$ such that every component of $S-\rho(G)$ is a 2-cell.
For a book devoted to graph embeddings, see Mohar and Thomassen~\cite{MT01}.

For a connected graph $G$, let $\gamma(G)$ and $\gamma_{M}(G)$ denote the minimum and maximum integers $g$, respectively, such that $G$ can be embedded on the orientable surface $\mathbb{S}_g$; let $\tilde{\gamma}(G)$ and $\tilde{\gamma}_{M}(G)$ denote the minimum and maximum integers $g$, respectively, such that $G$ can be embedded on the nonorientable surface $\mathbb{N}_g$. In 1966, Duke \cite{Duke1966} proved that $G$ has embeddings on all orientable surfaces $\mathbb{S}_{g}$, where $\gamma(G)\leq g\leq \gamma_{M}(G)$. This result is known as {\it Duke's Interpolation Theorem}. The nonorientable counterpart to this result is due to Stahl \cite{Stahl1978}, which stated that $G$ has embeddings on all nonorientable surfaces $\mathbb{N}_{g}$, where $\tilde{\gamma}(G)\leq g\leq \tilde{\gamma}_{M}(G)$. These two interpolation theorems are fundamental results in the theory of graph embeddings.

A {\it signed graph} $\Sigma=(G,\sigma)$ is a graph $G=(V(G), E(G))$  with a mapping $\sigma:E(G)\longrightarrow\{+1,-1\}$, which assigns each edge of $G$ a positive or negative sign.  An edge $e\in E(G)$ is called a {\it positive edge} if $\sigma(e)=1$, and a negative edge if $\sigma(e)=-1$.
A cycle of $\Sigma$ is {\it positive} if it contains an even number of negative edges, and {\it negative} otherwise.
A signed graph $\Sigma$ is called {\it balanced} if every cycle of it is positive and {\it unbalanced} otherwise.

As an extension of graphs, signed graphs have a wide range of applications, and many graph problems can be naturally formulated in the signed graphs. For an overview, see \cite{Zaslavsky1998}.  Zaslavsky~\cite{Zaslavsky1981} first posed the question of developing the embedding theory for signed graphs and later introduced signed graph embeddings~\cite{Zaslavsky1992} which was called orientation embeddings in \cite{Zaslavsky1992}.

An {\it embedding}  of a signed graph $\Sigma=(G,\sigma)$ is an embedding $\rho$ of $G$ on a surface $S$ such that a cycle $C$ of $\Sigma$ is balanced if and only if its image $\rho(C)$ is orientation-preserving on $S$.   A {\it rotation system} $R$ of  $\Sigma=(G,\sigma)$ is an assignment of a rotation at each vertex. We refer the reader to \cite{Zaslavsky1992} for a general theory of signed graph embeddings. In analogy to embeddings of ordinary graphs, several embedding parameters of signed graphs, such as demigenus and maximum Euler-genus, have been investigated in~\cite{Lv2015,Lv2018,SiranSkoviera1991,Zaslavsky1997,Zaslavsky2001}. Some other problems related to signed graph embeddings have also been studied in~\cite{Siran1991-1,Zaslavsky1993,Zaslavsky1996,Zaslavsky1997-1}.

In this paper, we focus on Euler-genus spectrum of signed graphs. Let $\Sigma=(G,\sigma)$ be a connected signed graph and $\rho$ is an embedding of $\Sigma$.
We denote $f(\rho)$ the number of faces in the embedding and $\gamma^E(\rho)$ its Euler-genus. From Euler's formula, we have
\[
\gamma^E(\rho)
=|E(G)|-|V(G)|+2-f(\rho).
\]
The \textit{Euler-genus spectrum} of $\Sigma$ is the set
\[
\mathcal{S}(\Sigma)
=\{\gamma^E(\rho):
\rho\text{ is an embedding of}~ \Sigma\}.
\]

Motivated by interpolation theorems of graphs which we mentioned above, Zaslavsky \cite{Zaslavsky1992} conjectured an analogous interpolation property for signed graphs, i.e., the Euler-genus spectrum $\mathcal{S}(\Sigma)$ is an interval of integers for every connected signed graph $\Sigma$.  It is well known  that a signed graph $\Sigma$ has an embedding on an orientable surface if and only if $\Sigma$ is balanced. Thus, Duke's interpolation theorem
extends automatically to embeddings of balanced signed graphs. Therefore, we only need to focus on unbalanced signed graphs.
Ultimately, this conjecture was disproved by \v{S}ir\'a\v{n} \cite{Siran1991}. \v{S}ir\'a\v{n} constructed a signing $\Sigma$ of $K_1+K_{3,3}$ in which the negative edges form a matching of $K_{3,3}$ and showed that
\[
\cS(\Sigma)=\{1,3,4,5,6,7,8,9\}.
\]
Thus $\cS(\Sigma)$ has a gap at $2$ and hence is not an interval.
This leads to the following weaker interpolation conjecture due to \v{S}ir\'a\v{n}. As noted by
Zaslavsky~\cite{Zaslavsky1998}, the problem was later included by Archdeacon~\cite{Archdeacon1999} in his collection of open problems in topological graph theory under the title ``The genus sequence of a signed graph''.

\begin{conjecture}[\cite{Siran1991,Archdeacon1999,Zaslavsky1998}]
\label{c1:1}
Let $\Sigma$ be a connected signed graph. If
$k,k+1\in\cS(\Sigma)$, then every integer between $k$ and the maximum
Euler-genus of $\Sigma$ also belongs to $\cS(\Sigma)$; that is,
\[
\{k,k+1,\ldots,\gamma^E_M(\Sigma)\}
\subseteq \cS(\Sigma),
\]
where $\gamma^E_M(\Sigma)=\max\cS(\Sigma).$
\end{conjecture}

We give a  characterization of the Euler-genus spectrum of a connected signed graphs as follows. This stronger result directly establishes \v{S}ir\'{a}\v{n}'s conjecture.

\begin{theorem}\label{t1:2}
Let $\Sigma$ be a connected signed graph,
$d=\min\cS(\Sigma)$ and $\gamma^E_M(\Sigma)=\max\cS(\Sigma)$. If $\tau$ is the smallest integer such that
$\tau,\tau+1\in\cS(\Sigma)$, then
\[
\cS(\Sigma)
=
\{d,d+2,\ldots,\tau\}
\cup
\{\tau+1,\tau+2,\ldots,\gamma_M^E(\Sigma)\}.
\]
\end{theorem}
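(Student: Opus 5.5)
Since $\gamma^E(\rho)=|E(G)|-|V(G)|+2-f(\rho)$, the statement is really about the set of achievable face numbers. We reduce it to two claims. \emph{(A) Parity interpolation:} for each parity class $p\in\{0,1\}$, the set $\cS_p(\Sigma)=\{g\in\cS(\Sigma): g\equiv p \pmod 2\}$ is a step-two interval $\{m_p,m_p+2,\ldots,M_p\}$. \emph{(B) Top adjacency:} if both classes are nonempty, then $|M_0-M_1|=1$.

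Given (A) and (B), the theorem follows by an elementary set computation. Suppose only one parity class occurs. Then no two consecutive integers lie in $\cS(\Sigma)$, and the statement should be read with $\tau=\gamma^E_M(\Sigma)$; the spectrum is $\{d,d+2,\ldots,\gamma^E_M(\Sigma)\}$, as claimed. Now suppose both classes occur, and let $d$ be the minimum of $\cS(\Sigma)$, lying in class $p$. The other class has minimum $m_{1-p}>d$. Set $\tau=m_{1-p}-1$. Then $\tau\equiv p$, and $\tau\le M_p$, since $M_p\ge M_{1-p}-1\ge m_{1-p}-1$. Hence $\tau\in\cS_p$ by (A), and $\tau+1=m_{1-p}\in\cS(\Sigma)$.

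No smaller pair $k,k+1$ can lie in $\cS(\Sigma)$, because every element below $m_{1-p}$ lies in class $p$. Above $\tau$, both classes run in step two up to $M_0$ and $M_1$, which differ by one. Their union is therefore exactly $\{\tau+1,\ldots,\gamma^E_M(\Sigma)\}$. One small point still needs checking: at the top, (B) must be used so that the class with the smaller maximum still starts no higher than the other class's minimum plus one. This holds because $m_p=d<\tau+1$.

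To prove (A), we encode a signed embedding as a pre-signed graph. This is a rotation system together with edge signatures, taken up to switching and equivalent to $\sigma$. Flipping the sign of a single edge, or changing a vertex rotation, changes the face number by $-1,0,+1$ in general, and so changes the parity of the Euler genus. Staying within one parity class therefore requires a move that changes $f$ by $0$ or $\pm2$. The natural choice is an \emph{adjacent exchange}: transpose two consecutive half-edges in one rotation. This alters at most three faces locally and changes $f$ by $0$ or $\pm 2$ when the signs are fixed. It is also known that any two rotation systems are connected by a sequence of such transpositions.

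Next we must show that, within a fixed switching class, all the embeddings of a given parity reachable this way cover every intermediate value. Here the signature of each cycle must be preserved, and vertex switching does not change the embedding. We would therefore pass to a canonical signature, say one where a spanning tree is positive, and vary only the rotations. Then $f$ changes by at most $2$ per adjacent exchange. Walking from a minimum-genus rotation of parity $p$ to a maximum-genus one gives every value of that parity, provided we can show that parity is constant along the walk. Parity here is $|E|-|V|-f \bmod 2$, and $f \bmod 2$ is controlled by the signature, since the number of faces has a fixed parity determined by the sum of negative edges on the face boundaries. The main obstacle is exactly this: the parity of $f$ is not fixed by $\Sigma$ alone, which is why both classes can occur. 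So we cannot walk freely. We must order the exchanges so that every intermediate embedding stays in the same parity class. We would try to show that an adjacent exchange changes $f$ by an odd amount only when it swaps a negative and a positive half-edge of a certain type. Where this happens, we would pair such exchanges so that each pair acts as a parity-preserving step of size at most $2$.

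For (B), we take a maximum-genus embedding. This is typically one face, or two faces by the matching interpretation: faces of an alternating matching graph, with maximum genus attained by a single face whenever the signs permit. We then show that a single adjacent exchange there yields an embedding whose face number differs by exactly $1$, that is, two faces. This gives an element of the opposite parity at $M-1$. Combined with the fact that the opposite class cannot exceed $M$, this yields $|M_0-M_1|=1$. The delicate part is producing a face-splitting exchange that does not merely produce $f=3$. For this we would use the alternating-matching description to choose the exchange at a vertex where the unique face passes twice with opposite local orientations.
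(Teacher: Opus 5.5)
Your reduction to (A) and (B) is correct, and so is the set computation that derives the theorem from them. This is exactly how the paper gets the statement from its two theorems on parity classes. The gap is that your arguments for (A) and (B), which carry all the content, do not work.

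\textbf{The argument for (A).} Your claim that an adjacent exchange changes $f$ by $0$ or $\pm2$ when the signs are fixed is false. The exchange conjugates $\nu_Q=\theta Q$ by $(a\,b)(\theta a\,\theta b)$. Each of the two transpositions changes the number of alternating cycles by $-1$, $0$ or $+1$, so the net change can be $\pm1$. In the paper's two-vertex example (two negative edges, one positive), reversing the rotation at $u$ is a single adjacent exchange. It turns the $2$-face projective-plane embedding into a $1$-face one.

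You notice this later, but the repair (pairing odd exchanges into parity-preserving steps of size at most $2$) is unsupported. Exchanges at one vertex do not commute, and odd steps can be separated by steps lying in the other parity class. A single walk can also skip values of a parity class, e.g.\ $m,m+1,m+3,m+4$ misses $m+2$.

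The missing idea is to use the whole cube $\{Q_A:A\subseteq\mathcal P\}$ instead of one walk, and to induct on the number of free exchanges:
\begin{itemize}
\item Split on one free exchange $p$, and let $a_i,b_i$ be the parity-$i$ minima of $f$ in the two halves.
\item Toggle $p$ at a minimizer. Either you stay in parity $i$, which gives $b_i\le a_i+2$, or you land in parity $1-i$ at distance $1$.
\item In the latter case you need the restricted form of (B) in the other half: its two parity-minima of $f$ differ by exactly $1$. This again gives $b_i\le a_i+2$.
\item The two step-two intervals from the induction then have minima at most $2$ apart, so their union is a step-two interval.
\end{itemize}
So (A) is derived from (B) applied to all subfamilies, not from a parity-preserving walk.

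\textbf{The argument for (B).} Your premise that a maximum Euler-genus embedding has one or two faces fails for signed graphs. Chain $k$ all-positive copies of $K_4$ and the theta graph above together by bridges. Euler genus is additive across bridges, both parities occur, and every maximum embedding has $k+1$ faces.

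More importantly, nothing guarantees that a single exchange from a given face-minimal embedding yields exactly $f+1$, and the paper never claims this. Its argument runs as follows.
\begin{itemize}
\item Suppose every single exchange $\delta_i=t_is_i$ from the minimizer $Q_0$ raised $c$ by exactly $2$. Then each of $t_i$ and $s_i$ raises it by $1$. So each swaps two vertices in the same class of a fixed bipartition $(U,V)$ of $\mathcal H(\mu,\nu_0)$.
\item Hence every $g_A$ preserves $U$ and $V$. Since $g_A$ is even, the permutation $\pi_A$ of $U$, whose cycles correspond to the components, has constant sign.
\item This forces $c(A)\equiv c(\varnothing)\pmod 2$ for all $A$, a contradiction.
\item So some exchange leaves $c$ unchanged. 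One re-bases at that (still minimal) embedding and inducts on the number of exchanges.
\end{itemize}
Neither this sign-invariance argument nor the re-basing induction appears in your proposal. Without them, (B) is not established, and consequently neither is (A).
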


To prove Theorem \ref{t1:2}, we prove the following two properties theorems about $\cS(\Sigma)$. From the following two theorems, Theorem \ref{t1:2} follows. We first introduce some notation. For $i\in\{0,1\}$, define $$\cS_i(\Sigma)=\{\gamma^E\in\cS(\Sigma):\gamma^E\equiv i\pmod2\}.$$ If $\cS_i(\Sigma)$ is nonempty, let $\gamma_{i,\min}^E=\min\cS_i(\Sigma)$ and $\gamma_{i,\max}^E=\max\cS_i(\Sigma)$.

\begin{theorem}\label{t1:3}
Let $\Sigma$ be a connected  signed graph. If both $\cS_0(\Sigma)$ and $\cS_1(\Sigma)$ are nonempty, then $\left|\gamma_{0,\max}^E-\gamma_{1,\max}^E\right|=1$.
\end{theorem}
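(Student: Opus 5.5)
Euler genus $\gamma^E(\rho)=|E|-|V|+2-f(\rho)$, so parity classes correspond to the parity of the face number. I will show that the maximum over one parity class is always one less than the maximum over the other.

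I work with embeddings of $\Sigma$ as pairs $(R,\lambda)$: a rotation system $R$ on $G$ together with an edge signature $\lambda$ switching-equivalent to $\sigma$. These are the pre-signed graphs of the abstract. Switching at a vertex $v$ reverses the local rotation at $v$ and negates the signs of the edges at $v$. It does not change the embedding. So two embeddings of $\Sigma$ differ by (i) local changes of rotation (moves of one edge-end within a rotation) and (ii) switchings.

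The key local fact is the following. Move one edge-end $h$ at a vertex $v$ to the adjacent position in the rotation (an adjacent exchange of two consecutive edge-ends). This changes the face number by at most $2$, hence $\gamma^E$ by at most $2$. More precisely, with $\Delta f=f'-f$, one has $\Delta f\in\{-2,-1,0,1,2\}$, where $|\Delta f|=1$ exactly when the corners involved lie in one face that is traversed "twisted" relative to itself. This is the standard computation: an adjacent exchange is a composition of cutting and regluing two corners. If the corners lie on two distinct faces, they merge ($-1$). If they lie on one face traversed consistently, it splits ($+1$) or stays one face. If they lie on one face traversed in opposite directions, it stays one face ($0$). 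The parity of $f$ flips only in the cases $\pm1$.

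Let $\rho_0$ be an embedding attaining the overall maximum $M=\gamma^E_M(\Sigma)$, of parity $i$. Let $\rho_1$ be any embedding with $\gamma^E(\rho_1)\not\equiv i \pmod 2$; one exists by hypothesis. The claim reduces to showing:
\begin{itemize}
\item[(a)] no embedding of parity $1-i$ has genus $M+1$ or more, which is immediate from maximality;
\item[(b)] some embedding of parity $1-i$ has genus exactly $M-1$.
\end{itemize}

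For (b), take the rotation systems of $\rho_0$ and $\rho_1$ in the same signature $\lambda$; switching is free, so this is possible. Connect them by a sequence of adjacent exchanges $\rho_0=\pi_0,\pi_1,\dots,\pi_m=\rho_1$. Some step must change the parity of $f$. Let $\pi_{j}\to\pi_{j+1}$ be the first such step, so $\gamma^E(\pi_{j+1})=\gamma^E(\pi_j)\pm1$. This alone does not give genus $M-1$, since $\gamma^E(\pi_j)$ may be far below $M$.

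So the main work, and the main obstacle, is to reorder the path. I will order the adjacent exchanges so that the parity change happens while the genus is still maximal. Concretely, I will show that from $\rho_0$ there exists a single adjacent exchange that changes parity. Since $M$ is maximal, the exchange cannot raise the genus to $M+1$, so it yields genus $M-1$.

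To get such a move I use a matching interpretation of faces. At a maximum-genus embedding, the faces and corners form an auxiliary "alternating matching graph". I claim that if every adjacent exchange at $\rho_0$ preserved the parity of $f$, then every face would be traversed consistently at every pair of adjacent corners. From that I want to deduce a global consistent orientation of the face boundaries. That orientation would make every cycle orientation-preserving, contradicting the existence of the opposite-parity embedding $\rho_1$. Indeed, if $\Sigma$ is balanced all Euler genera are even and only one class is nonempty; so $\Sigma$ must be unbalanced, and the opposite parity is realised only through a face that meets a negative edge twistedly.

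Making this contradiction rigorous is the hard step: turning "no parity-changing exchange at a maximum embedding" into "balance". I would first try induction on $|E|$. I would contract a positive edge (after switching, a spanning tree is all positive) and use that maximum genus behaves well under contraction. If that fails, I would fall back on an ordered sequence of exchanges that first fixes the rotations at all but one vertex. The rotation at the final vertex then interpolates between the two parities with genus changes of size one.
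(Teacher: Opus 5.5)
\textbf{The proposal has a genuine gap, and its central lemma is false.} Your reduction to (a) and (b) is correct. It is the same reduction the paper makes (Remark~\ref{re2:16} and the proof of Theorem~\ref{t1:3}): starting from a face-minimal embedding, one must reach face number one larger. The problem is (b). It rests entirely on the claim that at a maximum-genus embedding $\rho_0$ some \emph{single} adjacent exchange has $\Delta f=+1$. Equivalently, you claim that if no exchange at a maximum embedding changes parity, then $\Sigma$ is balanced. You do not prove this, and it is false.

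\emph{Counterexample.} Take one vertex with positive loops $e_1,e_2$ and a negative loop $e_3$, where $e_k$ has ends $h_k,h_k'$. Use the rotation $(h_1\,h_2\,h_1'\,h_2'\,h_3\,h_3')$.
\begin{itemize}
\item Here $\gamma^E=4-f\le 3$. Face tracing gives $f=1$: a torus from the interlaced positive loops, with $e_3$ a crosscap in one corner. So $\gamma^E=3$ is maximal.
\item For one-vertex embeddings, $\gamma^E$ is the $\mathbb F_2$-rank of the interlace matrix with the negative loops marked on the diagonal. Swapping adjacent ends of two distinct loops toggles one off-diagonal pair.
\item The three exchanges between ends of $e_1$ and $e_2$ un-interlace them. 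They give rank $1$, i.e.\ $\Delta f=+2$.
\item The exchange $h_3\leftrightarrow h_3'$ changes nothing.
\item The exchanges $h_2'\leftrightarrow h_3$ and $h_3'\leftrightarrow h_1$ give
\[
\begin{pmatrix}0&1&0\\1&0&1\\0&1&1\end{pmatrix}\qquad\text{and}\qquad
\begin{pmatrix}0&1&1\\1&0&0\\1&0&1\end{pmatrix},
\]
both of rank $3$, i.e.\ $\Delta f=0$. Direct face tracing confirms all of these values.
\end{itemize}
So no adjacent exchange at this maximum embedding changes parity. Yet $\Sigma$ is unbalanced, and $(h_1\,h_3\,h_1'\,h_3'\,h_2\,h_2')$ has $\gamma^E=2$. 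Genus $2$ is reached by first making the $\Delta f=0$ move $h_2'\leftrightarrow h_3$ and then swapping $h_1,h_2$.

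Your fallbacks do not help. This graph has one vertex and no edge to contract. At a single vertex, exchanges change $\gamma^E$ by $2$, which is exactly the difficulty. Your local analysis is also inconsistent. An exchange is two regluings (two transpositions, cf.\ Corollary~\ref{cor2:12}), not one. Your statement that ``$|\Delta f|=1$ exactly in the twisted case'' also contradicts your own case list.

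\textbf{The missing idea:} you must be allowed to move among face-minimal embeddings by $\Delta f=0$ steps before a $+1$ step appears. The paper's Lemma~\ref{lem3:1} does exactly this.
\begin{itemize}
\item Fix an ordered exchange family from $Q_{\min}$ to an opposite-parity $Q'$. If some singleton gives $+1$, you are done.
\item Suppose every singleton gave $+2$. By Lemma~\ref{lem2:11}, both transpositions of each $\delta_i$ then preserve the bipartition $(U,V)$ of $\mathcal H(\mu,\nu_0)$, so every $g_A$ preserves $U$ and $V$. Since $g_A$ is even, $\operatorname{sgn}(\pi_A)=\operatorname{sgn}(\pi_\varnothing)$. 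This forces all $c(A)$ to have the same parity, a contradiction.
\item Hence some singleton gives $0$. Induction on the number of operations then produces $A_0$ with $f(Q_{A_0})=f(Q_0)+1$. The induction either drops that operation, or re-bases at it via Lemma~\ref{lem2:8}, the new base still being minimal.
\end{itemize}
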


\begin{theorem}\label{t1:4}
Let $\Sigma$ be a connected  signed graph. For $i\in\{0,1\}$, if $\cS_i(\Sigma)$ is nonempty, then
$$\cS_i(\Sigma)=\{\gamma_{i,\min}^E,\gamma_{i,\min}^E+2,\ldots,\gamma_{i,\max}^E\}.$$
\end{theorem}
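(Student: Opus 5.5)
Theorem~\ref{t1:4} is an interpolation statement restricted to a single parity class. I will prove it by connecting any two embeddings of the same parity through a sequence of elementary moves. Along the sequence, the Euler genus will be controlled so that within each parity class it changes by at most $2$.

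The setup is as follows. Represent an embedding of $\Sigma$ by a rotation system $R$ together with an edge signature $\lambda$ that is switching-equivalent to $\sigma$. Two signatures give the same balance (i.e.\ the same orientation-preserving cycles) exactly when they differ by a switching. Switching at a vertex $v$ reverses the local rotation at $v$ and flips the signs of the edges at $v$; this does not change the embedding. I would fix a spanning tree $T$ and switch so that every edge of $T$ is positive. Then the signature is determined by $\sigma$ up to this normalization, and an embedding of $\Sigma$ is just a choice of rotation at each vertex. This is the pre-signed graph representation.

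The elementary move is an \emph{adjacent exchange}: swap two consecutive half-edges in the rotation at a single vertex. A standard local face count shows that if the two half-edges $h,h'$ at $v$ lie on faces $F_1,F_2$, the exchange does one of three things:
\begin{itemize}
\item merges $F_1$ and $F_2$ into one face (if they are distinct);
\item splits one face into two, or keeps it as one, if $F_1=F_2$.
\end{itemize}
The orientation of traversal and the signs of the edges decide which. Hence $f$ changes by $-1$, $0$ or $+1$, and so $\gamma^E$ changes by at most $1$ per move.

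The subtle point is parity. Euler genus parity is not preserved: a step can change $\gamma^E$ by $\pm1$. So I will pair moves. Given two embeddings $\rho,\rho'$ with $\gamma^E(\rho)\equiv\gamma^E(\rho')\pmod 2$, any rotation can be turned into any other by a sequence of adjacent exchanges (adjacent transpositions generate the symmetric group at each vertex). The resulting sequence of genera $g_0,g_1,\dots,g_m$ has consecutive differences in $\{-1,0,1\}$. Restrict attention to the members of this sequence with the given parity $i$. Between two consecutive such members the genus moves through values of the other parity only, so those intermediate values are all equal to some $g\pm1$. Hence consecutive same-parity members differ by $0$ or $2$.

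Consequently every value of parity $i$ between $\gamma^E(\rho)$ and $\gamma^E(\rho')$ is attained. Applying this with $\rho$ minimizing and $\rho'$ maximizing within $\cS_i(\Sigma)$ yields the step-two interval.

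The main obstacle I expect is the claim that each adjacent exchange changes the face number by at most one. For ordinary rotation systems this is the classical ``moving one half-edge changes at most three faces'' argument. For signed rotations, a face may traverse the vertex in both directions, using the twisted edges in opposite senses. I would verify it by a local analysis: remove the two corners adjacent to the swapped pair and reconnect them. This analysis shows the face boundary walks through $v$ are re-paired at exactly two corners, which can change the number of closed walks by at most one. This parallels the matching interpretation of face numbers mentioned in the abstract.

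If this local bound holds, the interpolation argument above is purely combinatorial and finishes the proof. If instead some exchanges can change $f$ by $2$, I would restrict to exchanges of half-edges of a fixed type.
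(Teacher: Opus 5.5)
Your whole argument rests on the claim that a single adjacent exchange changes $f$ by at most $1$. That claim is false. No finer local analysis can rescue it: together with your path argument it would show that $\mathcal S(\Sigma)$ is always a full integer interval, since a path of exchanges from a minimum-genus to a maximum-genus embedding would pass through every intermediate value. That contradicts \v{S}ir\'a\v{n}'s example $\mathcal S(\Sigma)=\{1,3,4,\ldots,9\}$.

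Your local picture goes wrong because an adjacent exchange of $a_{v,i}$ and $a_{v,i+1}$ re-pairs three corners at $v$, not two. The corners $(a_{v,i-1},a_{v,i})$, $(a_{v,i},a_{v,i+1})$ and $(a_{v,i+1},a_{v,i+2})$ all change. A concrete instance is the all-positive bouquet of two loops with ends $a,a'$ and $b,b'$. Exchanging $a'$ and $b$ turns the rotation $(a\,a'\,b\,b')$ into $(a\,b\,a'\,b')$, taking the planar embedding with $3$ faces to the toroidal one with $1$ face. More generally, for balanced $\Sigma$ every embedding is orientable, so every change of $f$ is even. The correct local bound is $|\Delta f|\le 2$. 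In the paper this comes from viewing an exchange as conjugation by a bi-transposition, i.e.\ two transposition-conjugations, each changing the number of alternating cycles by at most one (Lemma~\ref{lem2:11}, Corollary~\ref{cor2:15}).

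Once steps of size $2$ are allowed, your parity-restriction argument fails. The walk can leave parity $i$, move by $2$ inside the other parity class, and come back, for example $g,g+1,g+3,g+4$, which skips $g+2$. Restricting to exchanges of a ``fixed type'' does nothing to prevent this. An arbitrary path gives no control here; what is missing is a global parity statement.

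The paper supplies it by working with the whole Boolean cube $\{Q_A:A\subseteq\mathcal P\}$ of an ordered exchange sequence. It proves (Lemma~\ref{lem3:1}, and Lemma~\ref{lem4:1} for subcubes where some exchanges are forced on or off) that whenever both parities occur, the least face number of the non-minimal parity is exactly the overall minimum plus one. The heart of this is a sign argument. If every single exchange raised $c$ by exactly $2$ from the minimum, each bi-transposition would preserve the bipartition $(U,V)$ of $\mathcal H(\mu,\nu_0)$. Since each $g_A$ is an even permutation, this would force $c(A)\equiv c(\varnothing)\pmod 2$ for all $A$.

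Lemma~\ref{lem4:2} then inducts on the number of free exchanges, splitting the cube along a free exchange $p$. Corollary~\ref{cor2:15}, together with Lemma~\ref{lem4:1} applied within one half, shows that the parity-$i$ minima of the two halves differ by at most $2$. Hence the union of the two step-two intervals given by induction is again a step-two interval.
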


The paper is organized as follows. In Section \ref{Sec:2}, we present some preliminaries needed for this paper.
In Section \ref{Sec:3}, we prove Theorem \ref{t1:3}.
In Section \ref{Sec:4}, we prove Theorem \ref{t1:4}. In Section \ref{Sec:5}, we concludes the paper with some further discussions.

\section{Preliminaries}\label{Sec:2}
In this section, we first present the basic concepts and properties of pre-signed graphs. Then we introduce adjacent-exchange operations used in later proofs. Finally, by introducing auxiliary graphs called alternating matching graphs, we discuss how the number of faces of signed graph embeddings changes under adjacent-exchange operations.

Throughout this paper, permutations are in cycle decomposition forms and the
product of permutations are read from left to right. For a permutation $P$, we denote by $\lVert P\rVert$ the number of cycles of $P$, with fixed points counted as cycles of length one.

\subsection{Pre-signed graphs}
In 1980, Stahl \cite{Stahl1980} introduced useful combinatorial objects called permutation-partition pairs which can be used in describing  graphs and orientable graph embeddings. In 1986, Archdeacon \cite{Archdeacon86-1,Archdeacon86-2} introduced identified disk spaces independently which can be viewed as geometric forms of permutation-partition pairs. By using them, some important results in topological graph theory are derived again, and some new results are also obtained, see \cite{Archdeacon86-1,Archdeacon86-2,Archdeacon98,Bonnington94-1,Bonnington94-2,Stahl1980,Stahl1982,Stahl1983,Stahl1988,Stahl1991}. In 2026, Chen \cite{Chen2026PreSigned} introduced another combinatorial objects called pre-signed graphs, which is a generalization of permutation-partition pairs. Under this generalization, nonorientable embeddings of graphs, signed graphs, and embeddings of signed graphs can be described by pre-signed graphs. In the framework of pre-signed graphs, some problems of nonorientable surface embeddings of graphs and embeddings of signed graphs have been solved \cite{Chen2026PreSigned,Chen2026PreSigned01}.
Our paper is also conducted within the framework of pre-signed graphs.

We now present the basic theory of pre-signed graphs and its correspondence with signed graphs and signed graph embeddings.
\begin{definition}
Let $F$ and $F_{\theta}$ be two disjoint finite sets with $|F|=|F_{\theta}|$, and $\mathbf F=F\cup F_{\theta}$. A \emph{pre-signed graph} is a triple $\Sigma_P=(\theta,P,\mathbf{\Pi})$, where:
\begin{enumerate}[label=(\arabic*)]
\item $\theta$ is a fixed-point-free involution on $\mathbf F$ interchanging $F$ and $F_{\theta}$, i.e., $\theta^2=\iota$ and $\theta(F)=F_{\theta}$;
\item $P$ is a permutation of $\mathbf F$ satisfying $P\theta=\theta P^{-1}$, i.e., if $(a_1\,a_2\,\ldots\,a_r)$ is a cycle of $P$, then $(\theta a_1\,\theta a_r\,\ldots\,\theta a_2)$ is also a cycle of $P$;
\item $\mathbf{\Pi}$ is a partition of $\mathbf F$ satisfies
$\mathbf{\Pi}=\Pi\cup\theta\Pi$, where $\Pi=\{\Pi_1,\ldots,\Pi_n\}$ is a partition of $F$, and $\theta\Pi=\{\theta\Pi_1,\ldots,\theta\Pi_n\}$ is a partition of $F_{\theta}$.
\end{enumerate}
The permutation $P$ is called a \emph{bi-permutation}.
The partition $\mathbf{\Pi}$ is called a \emph{bi-partition}.
For each $i$ $(1\leq i\leq n)$, $\mathbf{\Pi}_i=\Pi_i\cup\theta\Pi_i$ is called a {\it vertex} of $\Sigma_P$. For each $a\in F$, the quadruple $\{a, \theta a, Pa, \theta Pa \}$ is called an {\it edge} of $\Sigma_P$, in which $\{a, \theta a\}$ is called a {\it semi-edge} at $a$, denoted by $e_a$.
The degree of the vertex $\mathbf{\Pi}_i$ is $d(\mathbf{\Pi}_i)=|\Pi_i|=|\theta\Pi_i|$.
\end{definition}

The following proposition shows that pre-signed graphs can represent signed graphs.

\begin{proposition}[\cite{Chen2026PreSigned}]\label{p2:2}
For every signed graph $\Sigma=(G,\sigma)$, there exists a corresponding pre-signed graph.
\end{proposition}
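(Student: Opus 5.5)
The plan is to build the pre-signed graph $\Sigma_P=(\theta,P,\mathbf{\Pi})$ directly from $\Sigma=(G,\sigma)$ by splitting each edge into darts and doubling each dart with a ``twin'' copy that carries the sign information. Concretely, for each edge $e=uv$ of $G$ I introduce two symbols $e^u$ and $e^v$, one at each end; loops are treated the same way, with both symbols placed at the single end vertex. Let $F$ be the set of all these symbols, so $|F|=2|E(G)|$. Let $F_\theta$ be a disjoint copy $\{\theta a: a\in F\}$, and let $\theta$ be the involution swapping $a$ and $\theta a$. Then $\theta$ is fixed-point-free, satisfies $\theta^2=\iota$, and $\theta(F)=F_\theta$, as required.

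The bi-partition is the natural one. For each vertex $v_i$, let $\Pi_i=\{e^{v_i}: e \text{ incident with } v_i\}$, counted with multiplicity for loops, so that $|\Pi_i|=\deg v_i$. Then $\Pi=\{\Pi_1,\ldots,\Pi_n\}$ partitions $F$, and $\mathbf{\Pi}=\Pi\cup\theta\Pi$ holds automatically.

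The bi-permutation $P$ encodes the edges and their signs. For a positive edge $e=uv$, I set $P$ to contain the two transpositions $(e^u\ e^v)$ and $(\theta e^u\ \theta e^v)$. For a negative edge, I instead take $(e^u\ \theta e^v)$ and $(\theta e^u\ e^v)$. The condition $P\theta=\theta P^{-1}$ then has to be checked on cycles. For a $2$-cycle $(a\,b)$, the rule says that $(\theta a\,\theta b)$ must also be a cycle of $P$. In the positive case this is exactly the partner transposition. In the negative case, $(a\,b)=(e^u\,\theta e^v)$ gives $(\theta e^u\, e^v)$, which is again present. All cycles have length $2$, so this verification is immediate.

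Finally I confirm that the edges of $\Sigma_P$ recover those of $G$. The quadruple $\{a,\theta a,Pa,\theta Pa\}$ for $a=e^u$ equals $\{e^u,\theta e^u,e^v,\theta e^v\}$ in either sign case, so it joins the vertices $\mathbf{\Pi}_u$ and $\mathbf{\Pi}_v$. The sign is read off from whether $P$ maps $F$ into $F$ or $F$ into $F_\theta$ on that edge. The only delicate point is that this sign convention must agree with the one \cite{Chen2026PreSigned} uses when defining the signed graph associated with a pre-signed graph. I would fix the convention so that a negative edge is one crossing between $F$ and $F_\theta$; balance is invariant under switching, which corresponds to swapping $\Pi_i$ with $\theta\Pi_i$. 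Loops need one more check: a negative loop gives $(e^u\,\theta e^{u'})$, which is consistent. With that convention in place, the construction gives the required correspondence.
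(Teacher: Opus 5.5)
Your construction is correct and is essentially the one the paper uses. The paper gives no proof of its own: it cites Chen's paper and illustrates the correspondence in the example right after Proposition~\ref{p2:2}, with one quadricell per edge, $\theta$ pairing each dart with its twin, $\mathbf{\Pi}$ recording incidences, and the sign carried only by $P$. The one real difference is the sign labeling. That example takes $F=\{a,\beta a\}$ and calls the crossing permutation $P=(a\,\epsilon a)(\alpha a\,\beta a)$ positive, which is the opposite of your convention. Yours is the labeling compatible with $f(Q)=\lVert PQ\rVert/2$: for a positive loop with darts $a,b$, your $P=(a\,b)(\theta a\,\theta b)$ and $Q=(a\,b)(\theta b\,\theta a)$ give $\lVert PQ\rVert=4$, i.e. two faces, whereas the example's labeling applied to a loop gives one face. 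So the face-count check, not switching invariance, is what should fix your convention; both labelings are switching invariant, and they agree on even cycles but disagree on odd ones such as loops.
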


\begin{example}  To illustrate this correspondence, consider first the simplest case in which the signed graph consists of a single edge $a=uv$. Associated with $a$ is the quadricell $K_a=\{a,\alpha a,\beta a,\epsilon a\}$, where $\epsilon=\alpha\beta=\beta\alpha$. Let $F=\{a,\beta a\}$ and $F_{\theta}=\{\alpha a,\epsilon a\}$, and let $\theta=\alpha=(a\,\alpha a)(\beta a\,\epsilon a)$. The two vertices $u$ and $v$ are represented by $\Pi_u=\{a\}$ and $\Pi_v=\{\beta a\}$ respectively, together with their $\theta$-partners $\theta\Pi_u=\{\alpha a\}$ and $\theta\Pi_v=\{\epsilon a\}$. Thus $\mathbf{\Pi}_u=\{a,\alpha a\}$, $\mathbf{\Pi}_v=\{\beta a,\epsilon a\}$ and $\mathbf{\Pi}=\{\mathbf{\Pi}_u, \mathbf{\Pi}_v\}.$

The sign of $a$ is encoded entirely in the bi-permutation $P$. If $a$ is positive, then $P=(a\,\epsilon a)(\alpha a\,\beta a)$, whereas if $a$ is negative, then $P=(a\,\beta a)(\alpha a\,\epsilon a)$. Thus the positive and negative versions of the same underlying edge have the same involution $\theta$ and the same vertex bi-partition $\mathbf{\Pi}$, and differ only in the corresponding bi-permutation $P$.

For an arbitrary signed graph $\Sigma=(G,\sigma)$, the same construction is carried out independently for every edge of $G$. The resulting bi-partition $\mathbf{\Pi}$ records the vertex incidences, while the product of the edge permutations gives $P$ and encodes both the edge structure and the signs. In this way, $\Sigma$ is represented by the pre-signed graph
$\Sigma_P=(\theta, P, \mathbf{\Pi}).$
\end{example}

We next define embeddings of a pre-signed graph.
\begin{definition} Suppose that $\Sigma_P=(\theta,P,\mathbf{\Pi})$ is a pre-signed graph and $\Pi_i=\{a_{i,1},a_{i,2},\ldots,a_{i,d_i}\}$. A \emph{bi-rotation} at the vertex $\mathbf{\Pi}_i=\Pi_i\cup\theta\Pi_i$ is a pair of cycles of the form $$(a_{i,j_1}\,a_{i,j_2}\,\cdots\,a_{i,j_{d_i}})(\theta a_{i,j_{d_i}}\,\cdots\,\theta a_{i,j_2}\,\theta a_{i,j_1}),$$ where $j_1j_2\cdots j_{d_i}$ is a permutation of $\{1,2,\ldots,d_i\}$. A \emph{bi-rotation system} $Q$ on $\mathbf{\Pi}$ is the product of one bi-rotation at each vertex. One can check that $Q$ satisfies $\theta Q=Q^{-1} \theta$. We denote by $\mathcal B(\mathbf{\Pi})$ the set of all bi-rotation systems on $\mathbf{\Pi}$.

An \emph{embedding} of the pre-signed graph $\Sigma_P=(\theta,P,\mathbf{\Pi})$ is a triple $(\theta,P,Q)$ with $Q\in\mathcal B(\mathbf{\Pi})$.
From \cite{Chen2026PreSigned}, the number of faces in an embedding $(\theta,P,Q)$ is
$f(Q)=\lVert PQ\rVert/2.$
\end{definition}

\begin{example}
Figure~\ref{fig:d3}(a) shows a signed graph $\Sigma$ with two vertices $u$ and $v$, joined by two negative edges and one positive edge, where
dashed edges are negative, the solid edge is positive. The corresponding pre-signed graph is $\Sigma_P=(\theta,P,\mathbf{\Pi})$, where
$$\theta=(1\,2)(3\,4)(5\,6)(7\,8)(9\,10)(11\,12),$$
$$P=(1\,3)(2\,4)(5\,8)(6\,7)(9\,11)(10\,12),$$
and
$$\mathbf{\Pi} = \{ \mathbf{\Pi}_u,\mathbf{\Pi}_v\}=\bigl\{ \{2,6,10\}\cup\{1,5,9\}, \{3,7,11\}\cup\{4,8,12\} \bigr\}.$$
Figure~\ref{fig:d3}(b) shows an embedding of $\Sigma$ on the projective plane, which is also an embedding of the corresponding pre-signed graph $\Sigma_P$ with the bi-rotation system
$$Q=(2\,6\,10)(9\,5\,1)(3\,7\,11)(12\,8\,4).
$$\end{example}

\begin{figure}[ht]
\begin{center}
\begin{tikzpicture}
[p/.style={circle,draw=black,fill=black,inner sep=1.4pt}]

  \node(u) at(0,0)[p]{}; \node(v) at(6,0)[p]{};
  \draw (-0.3,0) node {$u$}
        (6.3,0) node {$v$};
\draw[dashed, thick, bend left=60] (u) to (v);
\draw[dashed, thick, bend right=60] (u) to (v);
 \draw[thick] (u) -- (v);

  \node at (1, 0.8) {1};
  \node at (0.8, 1.3) {2};
  \node at (5, 0.8) {3};
  \node at (5.2, 1.3) {4};

  \node at (1.5, 0.2) {6};
  \node at (1.5, -0.2) {5};
  \node at (4.5, 0.2) {8};
  \node at (4.5, -0.2) {7};

  \node at (1, -0.8) {10};
  \node at (0.8, -1.2) {9};
  \node at (5, -0.75) {12};
  \node at (5.2, -1.2) {11};
\draw (3,-3) node {(a)};

\begin{scope}[xshift=11cm,
vertex/.style={circle,fill=black,inner sep=1.4pt},
ident/.style={->, >=stealth,line width=0.9pt}]

    \coordinate (u) at (0,0.9);
    \coordinate (v) at (0,-0.9);

    \coordinate (A) at (-1.7,1.62);
    \coordinate (B) at (1.7,1.62);
    \coordinate (C) at (1.7,-1.62);
    \coordinate (D) at (-1.7,-1.62);

    \draw(0,0) ellipse (3.0 and 2.0);

    \draw[thick] (u) -- (A);
    \draw[thick] (u) -- (B);
    \draw[thick] (u) -- (v);
    \draw[thick] (v) -- (C);
    \draw[thick] (v) -- (D);

    \node[vertex] at (u) {};
    \node[vertex] at (v) {};

    \node[above=2pt] at (u) {$u$};
    \node[below=2pt] at (v) {$v$};

    \draw[ident]
        (1.45,1.98)
        .. controls (0.70,2.23) and (-0.70,2.23) ..
        (-1.45,1.98);

    \draw[ident]
        (-1.45,-1.98)
        .. controls (-0.70,-2.23) and (0.70,-2.23) ..
        (1.45,-1.98);

    \node at (-1.05,1.62) {$9$};
    \node at (-1.05,1.05) {$10$};

    \node at (1.05,1.62) {$2$};
    \node at (1.05,1.05) {$1$};

    \node at (-0.27,0.52) {$5$};
    \node at (0.27,0.52) {$6$};

    \node at (-0.27,-0.52) {$7$};
    \node at (0.27,-0.52) {$8$};

    \node at (-1.05,-1.05) {$4$};
    \node at (-1.05,-1.62) {$3$};

    \node at (1.05,-1.05) {$11$};
    \node at (1.05,-1.62) {$12$};
\draw (0,-3) node {(b)};
\end{scope}
\end{tikzpicture}
 \caption{A signed graph and an embedding of it.}
\label{fig:d3}
\end{center}
\end{figure}

For more examples illustrating the relationship between embeddings of signed graphs and embeddings of pre-signed graphs, we refer to \cite{Chen2026PreSigned,Chen2026PreSigned01}.

By Proposition \ref{p2:2}, a signed graph can be represented by a pre-signed graph. Furthermore, there is a natural one-to-one correspondence between the rotation systems of a signed graph and bi-rotation systems of the corresponding pre-signed graph, so the embedding problems for signed graphs can be transformed into the embedding problems for pre-signed graphs. Therefore, in the following, we will study the embeddings in the framework of pre-signed graphs.

\subsection{Adjacent-exchange operations}
In this subsection, we will introduce an operation called adjacent-exchange that can change one bi-rotation system in an embedding into another bi-rotation system.

For an embedding of a pre-signed graph $\Sigma_P$ with a bi-rotation system $Q_0$, suppose that the bi-rotation at the vertex $v$ is
$$Q_{0,v}=(a_{v,1}\ldots a_{v,i} a_{v,i+1}\ldots a_{v,d_v}) (\theta a_{v,d_v}\ldots\theta a_{v,i+1}  \theta a_{v,i}\ldots\theta a_{v,1}).$$
For $1\leq i\leq d_{v}-1$, an {\it adjacent-exchange} on the bi-rotation at vertex $v$ exchanges $a_{v,i}$ and $a_{v,i+1}$  in the first cycle, and simultaneously exchanges  $\theta a_{v,i}$ and
$\theta a_{v,i+1}$ in the second cycle.
By the adjacent-exchange operation, we get a new bi-rotation system $Q_1$ in which the  bi-rotation at vertex $v$ is
$$Q_{1,v}=(a_{v,1}\ldots a_{v,i+1} a_{v,i}\ldots a_{v,d_v}) (\theta a_{v,d_v}\ldots\theta a_{v,i}\theta a_{v,i+1}\ldots\theta a_{v,1}).$$
This operation can be realized by conjugating the bi-rotation with a permutation. We define
$$
\delta_i=
(a_{v,i}\,a_{v,i+1})
(\theta a_{v,i}\,\theta a_{v,i+1}),
$$and call it  a {\it bi-transposition}, then
$$Q_{1,v}=\delta_i Q_{0,v} \delta_i^-.$$

For any vertex $v$ with initial bi-rotation $Q_{0,v}$, let $\ell_v$ denote the number of  adjacent-exchange operations at $v$, and let $k_{v,j}\in\{1,2,\ldots,d_v-1\}$ $(1\leq j \leq \ell_v)$ be their exchange positions.
We denote by $p_{v,j}$ the $j$-th adjacent-exchange operation at vertex $v$, occurring at position $k_{v,j}$. It exchanges the elements at positions $k_{v,j}$ and $k_{v,j}+1$ in each of the two cycles of the current bi-rotation. Let
$$\mathcal P=\{p_{v,j}: v ~\mbox{is a vertex of}~ \Sigma_P, 1\leq j\leq \ell_v\}.$$ and $m=\sum_{v\in V}\ell_v$.

For simplicity, we denote
$\mathcal P=\{p_1,p_2,\ldots,p_m\}$, and equip them with a total order $p_1\prec p_2\prec\cdots\prec p_m$
that respects the prescribed order at each vertex.
Since operations at distinct vertices act on disjoint sets of symbols, the order in which they are interleaved does not matter. If $p_i=p_{v,j}$, then the corresponding  bi-transposition is
$$
\delta_i=
(a_{v,k_{v,j}}\,a_{v,k_{v,j}+1})
(\theta a_{v,k_{v,j}}\,\theta a_{v,k_{v,j}+1}),
$$
with respect to the initial bi-rotation system $Q_0$. In the following, we will always use $\delta_i$ to denote the bi-transposition corresponding to $p_i$.

For $A\subseteq\mathcal P$, we let
$$\chi_A(i)= \begin{cases} 1, & p_i\in A,\\ 0, & p_i\notin A, \end{cases} \qquad 1\le i\le m,$$  and
$$g_A=
\delta_1^{\chi_A(1)}
\delta_2^{\chi_A(2)}
\cdots
\delta_m^{\chi_A(m)}.$$
For $0\leq i\leq m$,  we denote
$g_A^{(i)}=\delta_1^{\chi_A(1)}\cdots
\delta_i^{\chi_A(i)}$, in which $g_A^{(0)}=\iota$ and
$g_A^{(m)}=g_A$.

Let $Q_A$ be the bi-rotation system obtained from $Q_0$ by performing adjacent-exchange operations in $A$ in the prescribed order. For
$0\leq i\leq m$, let $Q_A^{(i)}$ denote the bi-rotation system obtained
after the first $i$ exchange operations have been processed. Thus
$Q_A^{(0)}=Q_0$ and $Q_A^{(m)}=Q_A$. If for some $i$ $(1\leq i \leq m)$, $p_i\notin A$, then we do nothing in this step and $Q_A^{(i)}=Q_A^{(i-1)}$ follows. We also note that $g_A^{(i-1)}$
can be seen as a combined effect of the first $i-1$ exchange operations in $A$.  When the first $i-1$ exchange operations have already been applied, the action of $\delta_i$ on the current bi-rotation system $Q_{A}^{(i-1)}$
is given by the conjugate
\[
\widehat{\delta}_i
=
g_A^{(i-1)}\delta_i
\bigl(g_A^{(i-1)}\bigr)^{-1}.
\]

\begin{example}Suppose that the bi-rotation at a vertex $v$ is
$$(a_1,a_2,a_3,a_4) (\theta a_4,\theta a_3,\theta a_2,\theta a_1),$$
and that the sequence of exchange positions is
$(k_{v,1},k_{v,2},k_{v,3})=(1,2,1).$
The corresponding bi-transpositions are
$$\begin{aligned}
\delta_1&=(a_1\,a_2)(\theta a_1\,\theta a_2),\\
\delta_2&=(a_2\,a_3)(\theta a_2\,\theta a_3),\\
\delta_3&=(a_1\,a_2)(\theta a_1\,\theta a_2).
\end{aligned}$$
Thus, $$\begin{aligned}
\widehat\delta_1 &=\delta_1 =(a_1\,a_2)(\theta a_1\,\theta a_2),\\
\widehat\delta_2 &=\delta_1\delta_2\delta_1^{-1} =(a_1\,a_3)(\theta a_1\,\theta a_3),\\
\widehat\delta_3 &=(\delta_1\delta_2)\delta_3 (\delta_1\delta_2)^{-1} =(a_2\,a_3)(\theta a_2\,\theta a_3).
\end{aligned}$$
Applying these three ordered adjacent exchanges successively yields the following sequence of bi-rotations:
$$\begin{aligned}
&(a_1,a_2,a_3,a_4) (\theta a_4,\theta a_3,\theta a_2,\theta a_1)\\
&\longrightarrow (a_2,a_1,a_3,a_4) (\theta a_4,\theta a_3,\theta a_1,\theta a_2)\\
&\longrightarrow (a_2,a_3,a_1,a_4) (\theta a_4,\theta a_1,\theta a_3,\theta a_2)\\
&\longrightarrow (a_3,a_2,a_1,a_4) (\theta a_4,\theta a_1,\theta a_2,\theta a_3).
\end{aligned}$$
\end{example}

The following lemma shows that any two bi-rotation systems in $\mathcal B(\mathbf{\Pi})$ can be connected by a finite sequence of adjacent-exchange operations, and these operations can be realized by conjugation.

\begin{lemma} \label{lem2:7} For every $A\subseteq\mathcal P$, $Q_A=g_AQ_0g_A^{-1}$.\end{lemma}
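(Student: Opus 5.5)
We will prove the stronger statement $Q_A^{(i)}=g_A^{(i)}Q_0\bigl(g_A^{(i)}\bigr)^{-1}$ for every $0\le i\le m$ by induction on $i$; the lemma is the case $i=m$. The base case $i=0$ holds because $g_A^{(0)}=\iota$ and $Q_A^{(0)}=Q_0$. For the inductive step, if $p_i\notin A$ then $\chi_A(i)=0$, so $g_A^{(i)}=g_A^{(i-1)}$ and $Q_A^{(i)}=Q_A^{(i-1)}$, and the claim carries over directly. So the only real case is $p_i\in A$.

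Suppose $p_i\in A$, so that $g_A^{(i)}=g_A^{(i-1)}\delta_i$. We must show that performing the adjacent exchange $p_i$ on $Q_A^{(i-1)}$ produces
\[
g_A^{(i-1)}\delta_i Q_0 \delta_i^{-1}\bigl(g_A^{(i-1)}\bigr)^{-1}
=\widehat\delta_i\, Q_A^{(i-1)}\,\widehat\delta_i^{-1},
\]
where the equality uses the induction hypothesis. By the definition of adjacent exchange, $p_i$ swaps the entries occupying positions $k$ and $k+1$ ($k=k_{v,j}$) in the first cycle of the current bi-rotation at $v$, and correspondingly in the second cycle. Conjugating a cycle by a transposition of two of its entries swaps them. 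So the step reduces to identifying $\widehat\delta_i$ as the bi-transposition of the entries currently at positions $k,k+1$ of $Q^{(i-1)}_{A,v}$.

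This identification is the key point, and the main obstacle is keeping the bookkeeping of ``position'' versus ``label'' straight. Write $g=g_A^{(i-1)}$. By the induction hypothesis, the first cycle of $Q_{A,v}^{(i-1)}$ is $(g a_{v,1}\,g a_{v,2}\,\cdots\,g a_{v,d_v})$, since conjugating a cycle by $g$ relabels each entry $x$ as $g x$. Hence the entry at position $k$ is $g a_{v,k}$. The same applies to the second cycle, whose reversed position $k$ carries $g\theta a_{v,k}$. Note that $g$ commutes with $\theta$, because each $\delta_j$ does. Then
\[
\widehat\delta_i=g\,(a_{v,k}\,a_{v,k+1})(\theta a_{v,k}\,\theta a_{v,k+1})\,g^{-1}
=(g a_{v,k}\,\,g a_{v,k+1})(\theta g a_{v,k}\,\,\theta g a_{v,k+1}),
\]
which is exactly the bi-transposition of the symbols currently in positions $k,k+1$ of both cycles. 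We should check that the composition convention (products read left to right) is consistent with this relabeling formula; it is, given the formula $Q_{1,v}=\delta_iQ_{0,v}\delta_i^{-}$ stated for a single exchange and the paper's running example ($\widehat\delta_2=(a_1\,a_3)\cdots$ matches positions $2,3$ of $(a_2,a_1,a_3,a_4)$).

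Finally, the $\delta_j$ act on symbols at distinct vertices disjointly. So $g$ restricted to the symbols at $v$ involves only the earlier exchanges at $v$, and other vertices' bi-rotations are fixed by $\widehat\delta_i$. This justifies that the interleaving order is irrelevant and completes the induction.
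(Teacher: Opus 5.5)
Your proposal is correct and follows the paper's proof: the same induction on $i$ for $Q_A^{(i)}=g_A^{(i)}Q_0\bigl(g_A^{(i)}\bigr)^{-1}$, which is trivial when $p_i\notin A$ and uses $\widehat\delta_i Q_A^{(i-1)}\widehat\delta_i^{-1}=g_A^{(i)}Q_0\bigl(g_A^{(i)}\bigr)^{-1}$ when $p_i\in A$. You also verify that $\widehat\delta_i$ swaps exactly the entries currently in positions $k,k+1$, which the paper only asserts before the lemma; this check is sound because $Q_A^{(i-1)}$ and $\widehat\delta_i$ are conjugates by the same $g$ (under the paper's left-to-right convention the common relabeling is $x\mapsto xg^{-1}$, i.e.\ your $gx$ read functionally, and your single-involution examples cannot detect the difference, but the argument is unaffected).
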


\begin{proof} We first prove  $Q_A^{(i)}=g_A^{(i)}Q_0(g_A^{(i)})^{-1}$ by induction. For $i=0$, we have $Q_A^{(0)}=Q_0$. Since $g_A^{(0)}=\iota$, we have $Q_A^{(0)}=g_A^{(0)}Q_0(g_A^{(0)})^{-1}$.

Assume that, after the first $i-1$ exchange operations have been processed, $Q_A^{(i-1)}=g_A^{(i-1)}Q_0(g_A^{(i-1)})^{-1}$.   For the $i$th exchange operation $p_i$ with  bi-transposition $\delta_i$, the bi-transposition that acts on
$Q_A^{(i-1)}$ is $\widehat\delta_i=g_A^{(i-1)}\delta_i(g_A^{(i-1)})^{-1}$. If $p_i\in A$, then $$Q_A^{(i)} =\widehat\delta_iQ_A^{(i-1)}\widehat\delta_i^{-1} =g_A^{(i-1)}\delta_iQ_0 (g_A^{(i-1)}\delta_i)^{-1} =g_A^{(i)}Q_0(g_A^{(i)})^{-1}.$$ If $p_i\notin A$, then $Q_A^{(i)}=Q_A^{(i-1)}$ and $g_A^{(i)}=g_A^{(i-1)}$, so $Q_A^{(i)}=g_A^{(i)}Q_0(g_A^{(i)})^{-1}$ holds. Thus the induction is complete.

Then, taking $i=m$, we get $Q_A=Q_A^{(m)}$ and $g_A^{(m)}=g_A$, $Q_A=g_AQ_0g_A^{-1}$ follows. The proof is complete.
 \end{proof}

For $A,B\subseteq\mathcal P$, their \emph{symmetric difference} is defined by
$A\triangle B=(A\setminus B)\cup(B\setminus A)$. Thus
$A\triangle B$ consists precisely of the exchange operations that belong
to exactly one of $A$ and $B$. Since $\mathcal P$ is equipped with a
fixed total order, we regard $A\triangle B$ as inheriting this order.
In particular, if
$A\triangle B=\{p_{j_1},\ldots,p_{j_r}\}$, then its elements are always
listed so that $j_1\prec\cdots\prec j_r$.
From Lemma \ref{lem2:7}, we have $Q_A=g_AQ_0g_A^{-1}$ and $Q_B=g_BQ_0g_B^{-1}$. The following lemma shows that relation between $g_A$ and $g_B$ is related to $A\triangle B$.

\begin{lemma}
\label{lem2:8}
For $A,B\subseteq\mathcal P$,
if $A\triangle B=\{p_{j_1},\ldots,p_{j_r}\}$, then
$$g_Ag_B^{-1}
=
\widehat\delta_{j_1}\cdots\widehat\delta_{j_r},$$
where
$\widehat\delta_j
=g_B^{(j-1)}\delta_j\bigl(g_B^{(j-1)}\bigr)^{-1}$.
\end{lemma}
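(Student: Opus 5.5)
The plan is to prove a prefix version of the identity by induction on $i$ and then take $i=m$. For $0\le i\le m$, let $(A\triangle B)^{(i)}$ denote the set of elements $p_j\in A\triangle B$ with $j\le i$, listed in the inherited order. I will show
\[
g_A^{(i)}\bigl(g_B^{(i)}\bigr)^{-1}=\prod_{p_j\in (A\triangle B)^{(i)}}\widehat\delta_j ,
\]
where the product is taken in increasing order of $j$ and $\widehat\delta_j=g_B^{(j-1)}\delta_j\bigl(g_B^{(j-1)}\bigr)^{-1}$. For $i=m$ this is exactly the statement, because $g_A^{(m)}=g_A$ and $g_B^{(m)}=g_B$.

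The base case $i=0$ is trivial: both sides equal $\iota$, since $g_A^{(0)}=g_B^{(0)}=\iota$ and the product is empty.

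For the inductive step I use the definitions $g_A^{(i)}=g_A^{(i-1)}\delta_i^{\chi_A(i)}$ and $g_B^{(i)}=g_B^{(i-1)}\delta_i^{\chi_B(i)}$. These give
\[
g_A^{(i)}\bigl(g_B^{(i)}\bigr)^{-1}=g_A^{(i-1)}\,\delta_i^{\chi_A(i)}\delta_i^{-\chi_B(i)}\,\bigl(g_B^{(i-1)}\bigr)^{-1}.
\]
The only fact needed here is that each bi-transposition $\delta_i$ is an involution. This holds because $\delta_i$ is a product of two disjoint transpositions: the symbols $a_{v,k},a_{v,k+1}$ lie in $F$, while their $\theta$-images lie in $F_\theta$. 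Consequently $\delta_i^{\chi_A(i)}\delta_i^{-\chi_B(i)}=\delta_i^{\varepsilon_i}$, where $\varepsilon_i=1$ exactly when $p_i\in A\triangle B$ and $\varepsilon_i=0$ otherwise.

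Next I insert $\iota=\bigl(g_B^{(i-1)}\bigr)^{-1}g_B^{(i-1)}$ between $g_A^{(i-1)}$ and $\delta_i^{\varepsilon_i}$. This yields
\[
g_A^{(i)}\bigl(g_B^{(i)}\bigr)^{-1}=\Bigl[g_A^{(i-1)}\bigl(g_B^{(i-1)}\bigr)^{-1}\Bigr]\cdot g_B^{(i-1)}\delta_i^{\varepsilon_i}\bigl(g_B^{(i-1)}\bigr)^{-1}=\Bigl[g_A^{(i-1)}\bigl(g_B^{(i-1)}\bigr)^{-1}\Bigr]\,\widehat\delta_i^{\,\varepsilon_i}.
\]
By the induction hypothesis, the bracket is the ordered product of $\widehat\delta_j$ over $(A\triangle B)^{(i-1)}$. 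Multiplying on the right by $\widehat\delta_i$ when $p_i\in A\triangle B$, and by $\iota$ otherwise, gives exactly the ordered product over $(A\triangle B)^{(i)}$. This completes the induction.

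There is no real obstacle in this argument; the points needing care are bookkeeping ones. The conjugating element in $\widehat\delta_j$ must be the prefix $g_B^{(j-1)}$ of $B$ (not of $A$), and new factors are appended on the right, so the order of the product matches the inherited order on $A\triangle B$. Both points come out of the identity-insertion step above. I would also remark that this is consistent with Lemma~\ref{lem2:7}, since $Q_A=\bigl(g_Ag_B^{-1}\bigr)Q_B\bigl(g_Ag_B^{-1}\bigr)^{-1}$.
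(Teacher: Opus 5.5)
Your proposal is correct and follows essentially the same route as the paper. The paper also inducts on the prefix quotient $h_j=g_A^{(j)}\bigl(g_B^{(j)}\bigr)^{-1}$, uses $\delta_j^{-1}=\delta_j$, and appends $\widehat\delta_j$ on the right exactly when $p_j\in A\triangle B$.
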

\begin{proof}
For $0\le j\le m$, we define
$h_j=g_A^{(j)}\bigl(g_B^{(j)}\bigr)^{-1}$.
We first prove
\begin{equation}
h_j=
\prod_{\substack{k\le j,\ p_k\in A\triangle B}}
\widehat\delta_k
\label{eq1}
\end{equation}
by induction, where the factors are taken in increasing order of $k$, and the empty product is understood to be $\iota$.

For $j=0$, we have $h_0=\iota$, so \eqref{eq1} holds. Suppose that \eqref{eq1} holds for $j-1$. Since
$g_A^{(j)}=g_A^{(j-1)}\delta_j^{\chi_A(j)}$,
$g_B^{(j)}=g_B^{(j-1)}\delta_j^{\chi_B(j)}$,
and $\delta_j^{-1}=\delta_j$, we have
$$h_j=g_A^{(j-1)}
\delta_j^{\chi_A(j)}
\delta_j^{\chi_B(j)}
\bigl(g_B^{(j-1)}\bigr)^{-1}.$$

If $\chi_A(j)=\chi_B(j)$, then
$\delta_j^{\chi_A(j)}\delta_j^{\chi_B(j)}=\iota$, and
$h_j=h_{j-1}$. If $\chi_A(j)\ne\chi_B(j)$, then
$\delta_j^{\chi_A(j)}\delta_j^{\chi_B(j)}=\delta_j$, and
$$
h_j
=
g_A^{(j-1)}\delta_j
\bigl(g_B^{(j-1)}\bigr)^{-1}
=
h_{j-1}
g_B^{(j-1)}\delta_j
\bigl(g_B^{(j-1)}\bigr)^{-1}
=
h_{j-1}\widehat\delta_j.
$$
Thus, at step $j$, no new factor is introduced when
$p_j\notin A\triangle B$, whereas $\widehat\delta_j$ is appended when
$p_j\in A\triangle B$. This proves \eqref{eq1} by induction.

Then, taking $j=m$, we obtain
$g_Ag_B^{-1}=h_m
=\widehat\delta_{j_1}\cdots\widehat\delta_{j_r}$, the proof is complete.
\end{proof}

\subsection{Transpositions and Cycle Count}
In this subsection, we introduce an auxiliary graph defined by two fixed-point-free involutions. Later we will use it to count cycles, and thereby count the number of faces in embeddings.

Let $\mu$ and $\nu$ be fixed-point-free involutions on a finite set
$\Omega$. Then their cycle decompositions consist entirely of disjoint $2$-cycles .
Each such $2$-cycle of $\mu$ pairs two elements $x$ and $\mu(x)$;
each $2$-cycle of $\nu$ pairs two elements $x$ and $\nu(x)$.

We define
\[
M_\mu := \{ x\mu(x) : x \in \Omega \}, ~~\mbox{and}~~M_\nu := \{ x\nu(x) : x \in \Omega \},
\]
and called them {\it perfect matchings on $\Omega$}. Futhermore, we define the {\it alternating matching graph} $\mathcal H(\mu,\nu)$, by taking the vertex set $V(\mathcal H(\mu,\nu))=\Omega$ and the edge set
\[
E(\mathcal H(\mu,\nu)) = M_\mu \sqcup M_\nu,
\]
where the symbol $\sqcup$ denotes a disjoint union: if an edge belongs to both $M_\mu$ and $M_\nu$, we keep both two copies.  We call edges in $M_\mu$ and $M_\nu$ are $\mu$-edges and $\nu$-edges, respectively.

From definitions of $\mu$, $\nu$ and $\mathcal H(\mu,\nu)$,  every vertex $v\in V(\mathcal H(\mu,\nu))$ is incident with exactly one $\mu$-edge and exactly one $\nu$-edge, so every vertex has degree $2$. Moreover, as we traverse a component of the graph, the edges alternate between $\mu$-edges and $\nu$-edges. We call each component of
$\mathcal H(\mu,\nu)$ is an {\it alternating cycle}. Since it is a cycle, it is a bipartite graph. Let $c(\mu,\nu)$ be the number of components of $\mathcal H(\mu,\nu)$.
It is easy to check that
\[
c(\mu,\nu)=\frac12 \|\mu\nu\|.
\]

\begin{example}
On set $\Omega=\{1,\ldots, 12\}$, we let $\mu=(1,4)(2,3)(5,7)(6,8)(9,12)(10,11)$ and $\nu=(1,6)(2,9)(3,12)(4,7)(5,10)(8,11)$ be two fixed-point-free involutions. Then $\mu\nu=(1,7,10,8)(2,12)(3,9)(4,6,11,5)$.
Figure~\ref{fig:AM} illustrates the alternating matching graph $\mathcal H(\mu,\nu)$, the edges of $M_\mu$ are shown as dashed lines, while those of $M_{\nu}$ are shown as  solid lines. And we use solid dots and hollow dots to represent the bipartition of the vertices in each component. The graph $\mathcal H(\mu,\nu)$ has
$c(\mu,\nu)=\frac12\lVert\mu\nu\rVert=2$ components: one is the alternating $8$-cycle $(1,4,7,5,10,11,8,6)$ and the other is the alternating $4$-cycle $(2,3,12,9)$.
\end{example}

\begin{figure}[ht]
 \begin{center}
 \begin{tikzpicture}[
    vertex/.style={circle, draw=black, fill=white, inner sep=1.4pt},
    blackvertex/.style={circle, draw=black, fill=black, inner sep=1.4pt},
    solidedge/.style={line width=1pt},
    dashededge/.style={dashed, line width=1pt}
]

\node[vertex, label={[yshift=2pt]90:1}]  (v1)  at (0,2.8) {};
\node[blackvertex, label={[yshift=2pt]90:4}] (v4) at (2.0,2.8) {};

\node[vertex, label={[xshift=6pt]0:7}]  (v7) at (3.0,1.6) {};
\node[blackvertex, label={[xshift=6pt]0:5}] (v5) at (3.0,0.0) {};

\node[vertex, label={[yshift=-8pt]270:10}] (v10) at (2.0,-1.2) {};
\node[blackvertex, label={[yshift=-8pt]270:11}] (v11) at (0,-1.2) {};

\node[vertex, label={[xshift=-6pt]180:8}] (v8) at (-1.0,0.0) {};
\node[blackvertex, label={[xshift=-6pt]180:6}] (v6) at (-1.0,1.6) {};

\draw[solidedge] (v1) -- (v4);
\draw[solidedge] (v7) -- (v5);
\draw[solidedge] (v10) -- (v11);
\draw[solidedge] (v8) -- (v6);

\draw[dashededge] (v4) -- (v7);
\draw[dashededge] (v5) -- (v10);
\draw[dashededge] (v11) -- (v8);
\draw[dashededge] (v6) -- (v1);

\node[vertex, label={[yshift=2pt]90:2}] (v2) at (6.0,1.8) {};
\node[blackvertex, label={[yshift=2pt]90:3}] (v3) at (8.0,1.8) {};

\node[blackvertex, label={[yshift=-8pt]270:9}] (v9) at (6.0,0) {};
\node[vertex, label={[yshift=-8pt]270:12}] (v12) at (8.0,0) {};

\draw[solidedge] (v2) -- (v3);
\draw[solidedge] (v9) -- (v12);

\draw[dashededge] (v2) -- (v9);
\draw[dashededge] (v3) -- (v12);

\end{tikzpicture}
\end{center}
 \caption{The graph $\mathcal H(\mu,\nu)$, where
$\mu=(1,4)(2,3)(5,7)(6,8)(9,12)(10,11)$ and
$\nu=(1,6)(2,9)(3,12)(4,7)(5,10)(8,11).$}
  \label{fig:AM}
\end{figure}

The following result, due to Stahl, concerns the effect of a transposition on the number of cycles of a permutation. We will apply this result to our proof.

\begin{lemma}[\cite{Stahl1987}] \label{lem2:10}
Let $\pi$ be a permutation of a finite set $\Omega$, and let $(ab)$ be a transposition. Then the following hold:
\begin{enumerate}
    \item If $a$ and $b$ lie in the same cycle of $\pi$, then $\lVert (ab)\pi\rVert=\lVert \pi\rVert+1$, and $a$ and $b$ lie in distinct cycles of $(ab)\pi$.
    \item If $a$ and $b$ lie in distinct cycles of $\pi$, then $\lVert (ab)\pi\rVert=\lVert \pi\rVert-1$, and $a$ and $b$ lie in the same cycle of $(ab)\pi$.
\end{enumerate}
\end{lemma}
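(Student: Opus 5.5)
The statement to prove is Lemma~\ref{lem2:10}, Stahl's classical lemma on how a transposition changes the cycle count. I will prove it directly from the cycle structure of $\pi$, composing from left to right as fixed at the start of this section. Thus $(ab)\pi$ sends $x$ to $\pi((ab)(x))$. This map agrees with $\pi$ except that $a\mapsto \pi(b)$ and $b\mapsto \pi(a)$. Only the cycles of $\pi$ containing $a$ or $b$ are affected; all other cycles of $\pi$ remain cycles of $(ab)\pi$ unchanged. The proof therefore reduces to tracking one or two cycles.

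For the first case, suppose $a$ and $b$ lie in the same cycle of $\pi$. Write that cycle as $(a\,x_1\,\ldots\,x_s\,b\,y_1\,\ldots\,y_t)$, so $\pi(a)=x_1$ (or $b$ if $s=0$) and $\pi(b)=y_1$ (or $a$ if $t=0$).

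Under $(ab)\pi$, starting at $a$ we go to $\pi(b)=y_1$, then along $y_1,\ldots,y_t$, and $\pi(y_t)=a$ closes the cycle $(a\,y_1\,\ldots\,y_t)$. Starting at $b$ we go to $\pi(a)=x_1$, then along $x_1,\ldots,x_s$, and return to $b$, giving the cycle $(b\,x_1\,\ldots\,x_s)$. The degenerate cases $s=0$ or $t=0$ produce fixed points, which still count as cycles.

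So one cycle splits into two, $\lVert(ab)\pi\rVert=\lVert\pi\rVert+1$, and $a$ and $b$ now lie in distinct cycles.

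For the second case, suppose $a$ and $b$ lie in distinct cycles $(a\,x_1\,\ldots\,x_s)$ and $(b\,y_1\,\ldots\,y_t)$ of $\pi$. Now $(ab)\pi$ sends $a\mapsto y_1$, runs through $y_1,\ldots,y_t$, reaches $b$, then sends $b\mapsto x_1$, runs through $x_1,\ldots,x_s$, and returns to $a$. This produces the single cycle $(a\,y_1\,\ldots\,y_t\,b\,x_1\,\ldots\,x_s)$, again allowing $s=0$ or $t=0$.

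So two cycles merge into one, the count drops by one, and $a$ and $b$ now lie in the same cycle. Alternatively, case 2 follows from case 1 applied to $\pi'=(ab)\pi$, since $(ab)\pi'=\pi$.

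There is no real obstacle here. The only care needed is to use the paper's left-to-right composition convention consistently and to handle the degenerate cases where $a$ and $b$ are adjacent in their cycle or are fixed points. The conclusion is symmetric under the opposite composition convention, so even a convention slip would not affect the statement.
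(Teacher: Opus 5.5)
Your direct cycle-tracking argument is correct and complete, including the degenerate cases $s=0$ or $t=0$ and the paper's left-to-right convention. The paper itself gives no proof of this lemma and simply cites Stahl, so there is no proof to compare against; your explicit split/merge computation is the standard argument behind the result.

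One small caveat concerns your closing remark that case~2 follows from case~1 applied to $\pi'=(ab)\pi$. As stated, that remark is circular: invoking case~1 for $\pi'$ presupposes that $a$ and $b$ share a cycle of $\pi'$, which is exactly what case~2 asserts. Your explicit merge computation already settles case~2 on its own, so nothing is lost.
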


\begin{lemma}
\label{lem2:11}
Let $\mu$ and $\nu$ be fixed-point-free involutions on a finite set
$\Omega$, and $(ab)$ be a transposition. If $\nu'=(ab)\nu (ab),$ then $|c(\mu,\nu')-c(\mu,\nu)|\le 1.$
More precisely,
\[
c(\mu,\nu')-c(\mu,\nu)=
\begin{cases}
-1, & \text{if $a,b$ lie in distinct components of }
       \mathcal H(\mu,\nu),\\
+1, & \text{if $a,b$ lie in the same component and in the same}\\
    & \qquad \text{bipartition class of }\mathcal H(\mu,\nu),\\
0,  & \text{if $a,b$ lie in the same component and in opposite}\\
    & \qquad \text{bipartition classes of }\mathcal H(\mu,\nu).
\end{cases}\]
\end{lemma}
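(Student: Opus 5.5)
We prove Lemma~\ref{lem2:11} by reducing it to Lemma~\ref{lem2:10}, using the identity $c(\mu,\nu)=\frac12\lVert\mu\nu\rVert$. Since $(ab)$ is an involution, $\nu'=(ab)\nu(ab)$ is again a fixed-point-free involution on $\Omega$, so $c(\mu,\nu')$ is defined. We first dispose of the trivial case: if $b=\nu(a)$, then $\nu'=\nu$. In that case $a,b$ are adjacent by a $\nu$-edge, so they lie in the same component and in opposite bipartition classes, which matches the answer $0$. So assume $b\neq\nu(a)$.

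Our main step is to compute $\mu\nu'=\mu(ab)\nu(ab)$. We read products left to right, so this is a permutation conjugate to $(ab)\mu(ab)\nu$, and it can be written as a product of two transposition-multiplications applied to $\mu\nu$. Concretely, $\lVert\mu\nu'\rVert=\lVert(ab)\mu(ab)\nu\rVert$, and we insert the two transpositions one at a time.

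The cleaner route is to argue directly on the alternating matching graph. Replacing $\nu$ by $\nu'$ swaps the roles of $a$ and $b$ in $M_\nu$. Write $a'=\nu(a)$ and $b'=\nu(b)$. Then the $\nu$-edges $aa'$ and $bb'$ are replaced by $ba'$ and $ab'$, and all other edges are unchanged. This is a standard $2$-switch on the two $\nu$-edges.

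The case analysis then runs as follows.

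\begin{itemize}
\item \emph{Distinct components.} If $aa'$ and $bb'$ lie in distinct alternating cycles, deleting them leaves two paths. Reconnecting by $ba'$ and $ab'$ merges these into one cycle, so $c$ drops by $1$.
\item \emph{Same component.} Deleting $aa'$ and $bb'$ from a single cycle leaves two paths. The reconnection either yields one cycle ($c$ unchanged) or two cycles ($c$ increases by $1$), according to how the endpoints pair up.
\end{itemize}

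The point I expect to need the most care is identifying which reconnection occurs in terms of the bipartition. Traverse the cycle as $a, a', \dots, x, x', \dots$ starting along the edge $aa'$. If $b$ and $a$ are in the same class, their positions along the cycle have equal parity. Then $b$ is reached before $b'$ in the traversal, so the path from $a'$ runs to $b$. Adding $ba'$ closes this path into a cycle, and $ab'$ closes the other, giving $+1$. If $a$ and $b$ are in opposite classes, $b'$ precedes $b$, and the new edges join the two paths into a single cycle, giving $0$.

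I would verify this parity claim carefully. Along an alternating cycle, $\nu$-edges go from even to odd positions in the chosen direction, and this fixes the order of $b$ and $b'$. A bound $|c(\mu,\nu')-c(\mu,\nu)|\le1$ follows immediately from the three cases. As a cross-check, the $\pm1$ behaviour can also be derived from Lemma~\ref{lem2:10} applied twice to $\mu\nu$, using that $\lVert\mu\nu\rVert$ is even.
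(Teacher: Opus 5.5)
Your proof is correct, but it takes a different route from the paper's. The paper stays inside the permutation calculus. With $P=\mu\nu$, $a'=a\nu$, $b'=b\nu$, it rewrites $\mu\nu'=P(a'b')(ab)$, passes to the conjugate $(a'b')(ab)P$, and applies Lemma~\ref{lem2:10} twice. The real work there is in three claims that locate $a'$ and $b'$ in the cycles of $(ab)P$. These use the facts that the two bipartition classes of each component of $\mathcal H(\mu,\nu)$ are exactly the two cycles of $P$ on that component, and that $\nu$ swaps them.

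You instead observe that conjugating $\nu$ by $(ab)$ is a $2$-switch, replacing the $\nu$-edges $aa'$ and $bb'$ by $ab'$ and $ba'$. You then read off the result by cutting and reconnecting paths, with the parity of positions along the alternating cycle deciding whether $b$ precedes $b'$. Your argument is more elementary and self-contained, and it makes the three cases transparent. It also treats the degenerate case $b=\nu(a)$ explicitly, whereas the paper absorbs it silently (there $(a'b')(ab)=\iota$). The paper's version keeps everything in terms of cycles of $\mu\nu$, which matches how face numbers are computed and how the bipartition is exploited later in Lemma~\ref{lem3:1}.

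Two small remarks. Your opening detour through $(ab)\mu(ab)\nu$ is not needed once you switch to the graph argument. Your closing ``cross-check'' via Lemma~\ref{lem2:10} only recovers the bound $|c(\mu,\nu')-c(\mu,\nu)|\le 1$. Deciding which of $-1,0,+1$ occurs by that route requires exactly the paper's claims about where $a'$ and $b'$ lie after the first transposition.
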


\begin{proof}
Let $P=\mu\nu$, $a'=a\nu$, and $b'=b\nu$.  Then
$(a'b')=\nu(ab)\nu$, equivalently, $(ab)\nu=\nu (a'b')$. Thus,
$$\mu\nu'=\mu(ab)\nu(ab)=\mu\nu (a'b')(ab)=P(a'b')(ab).$$
Since $P(a'b')(ab)$ and $(a'b')(ab)P$ are conjugate,
$\lVert\mu\nu'\rVert=\lVert (a'b')(ab)P\rVert$.

Recall that every component of $\mathcal H(\mu,\nu)$ is an alternating even cycle.
On each component, applying $P=\mu\nu$ means following first a $\mu$-edge and then a $\nu$-edge, so the two
bipartition classes of the component are precisely the two cycles of
$P$. Moreover, $\nu$ interchanges these two classes. Thus, if $a$ lies
in one of these cycles, then $a'=a\nu$ lies in the other.

\medskip
\noindent\textbf{Claim 1.}
If $a$ and $b$ lie in distinct components of
$\mathcal H(\mu,\nu)$, then $a'$ and $b'$ lie in distinct cycles of
$(ab)P$.
\smallskip

\noindent\emph{Proof of the claim.}
The component of $\mathcal H(\mu,\nu)$ containing $a$ corresponds to
two partner cycles of $P$, one containing $a$ and the other containing
$a'=a\nu$. Similarly, the component containing $b$ corresponds to two
partner cycles of $P$, one containing $b$ and the other containing
$b'=b\nu$. Since $a$ and $b$ lie in distinct components, these four cycles are distinct. The transposition $(ab)$ merges the two
cycles containing $a$ and $b$, while the two partner cycles containing
$a'$ and $b'$ are unchanged. Hence $a'$ and $b'$ lie in distinct cycles of $(ab)P$. \qed

If $a$ and $b$ lie in distinct components of
$\mathcal H(\mu,\nu)$, then they lie in distinct cycles of $P$. By
Lemma~\ref{lem2:10},
$\lVert(ab)P\rVert=\lVert P\rVert-1$.
By Claim 1, $a'$ and $b'$ lie in distinct cycles of $(ab)P$. Applying
Lemma~\ref{lem2:10} again, we obtain
$\lVert (a'b')(ab)P\rVert=\lVert(ab)P\rVert-1=\lVert P\rVert-2$.

\medskip
\noindent\textbf{Claim 2.}
If $a$ and $b$ lie in the same component of $\mathcal H(\mu,\nu)$ and in the same bipartition class, then $a'$ and $b'$ lie in the same cycle of $(ab)P$.

\smallskip
\noindent\emph{Proof of the claim.}
Since $a$ and $b$ lie in the same bipartition class, they lie in the same cycle of $P$. The vertices $a'=a\nu$ and $b'=b\nu$ lie in the other cycle of $P$, but in the same component of $\mathcal H(\mu,\nu)$ as $a$ and $b$. The transposition $(ab)$ splits the cycle containing $a$ and $b$, while leaving the other cycle unchanged. Hence $a'$ and $b'$ remain in the same cycle of $(ab)P$. \qed

\medskip
If $a$ and $b$ lie in the same component and in the same bipartition class, then they lie in the same cycle of $P$. By Lemma~\ref{lem2:10},
$\lVert(ab)P\rVert=\lVert P\rVert+1$.
By Claim 2 and Lemma~\ref{lem2:10}, we obtain
$\lVert (a'b')(ab)P\rVert=\lVert(ab)P\rVert+1=\lVert P\rVert+2$.

\medskip
\noindent\textbf{Claim 3.}
If $a$ and $b$ lie in the same component of $\mathcal H(\mu,\nu)$ but in opposite bipartition classes, then $a'$ and $b'$ lie in the same cycle of $(ab)P$.

\smallskip
\noindent\emph{Proof of the claim.}
Since $a$ and $b$ lie in opposite bipartition classes, they lie in the two distinct cycles of $P$ associated with the same component. Moreover, $a'=a\nu$ and $b'=b\nu$ lie in the same cycles as
$b$ and $a$, respectively. The transposition $(ab)$ merges these two cycles into a single cycle. Hence $a'$ and $b'$ lie in the same cycle of $(ab)P$.  \qed

\medskip
If $a$ and $b$ lie in the same component but in opposite bipartition classes, then they lie in distinct cycles of $P$. By Lemma~\ref{lem2:10},
$\lVert(ab)P\rVert=\lVert P\rVert-1$.
By  Claim 3 and Lemma~\ref{lem2:10}, we have $\lVert (a'b')(ab)P\rVert=\lVert P\rVert$.

Since $c(\mu,\nu)=\frac12\lVert\mu\nu\rVert$ and
$c(\mu,\nu')=\frac12\lVert\mu\nu'\rVert$, the three cases give
$c(\mu,\nu')-c(\mu,\nu)=-1,+1,0$, respectively.  The Lemma follows.
\end{proof}

\begin{corollary}
\label{cor2:12}
Let $\mu$ and $\nu$ be fixed-point-free involutions on a finite set $\Omega$, and let $\delta=ts$ where $t$ and $s$ are disjoint transpositions. Then
$\bigl|c(\mu,\delta\nu\delta^{-1})-c(\mu,\nu)\bigr|\leq 2$.
\end{corollary}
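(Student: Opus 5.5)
Write $\delta=ts$ with $t=(a\,b)$ and $s=(c\,d)$ disjoint transpositions. Because they are disjoint, $t$ and $s$ commute and are involutions, so $\delta^{-1}=\delta=st$. Hence
$$\delta\nu\delta^{-1}=t\,s\,\nu\,s\,t=t\,(s\nu s)\,t .$$
The plan is to split this conjugation into two single-transposition conjugations and apply Lemma~\ref{lem2:11} to each.

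First set $\nu_1=s\nu s$. This is again a fixed-point-free involution on $\Omega$: conjugation preserves cycle type, so it consists of disjoint $2$-cycles covering $\Omega$. Lemma~\ref{lem2:11}, applied with the transposition $s=(c\,d)$, gives $|c(\mu,\nu_1)-c(\mu,\nu)|\le 1$.

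Next set $\nu_2=t\nu_1t$, which equals $\delta\nu\delta^{-1}$ by the display above. Since $\nu_1$ is a fixed-point-free involution, Lemma~\ref{lem2:11} applies again, now with $t=(a\,b)$ and $\nu_1$ in place of $\nu$, and gives $|c(\mu,\nu_2)-c(\mu,\nu_1)|\le1$.

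By the triangle inequality,
$$\bigl|c(\mu,\delta\nu\delta^{-1})-c(\mu,\nu)\bigr|\le |c(\mu,\nu_2)-c(\mu,\nu_1)|+|c(\mu,\nu_1)-c(\mu,\nu)|\le 2 .$$

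There is no real obstacle here. The two points that need checking are that the intermediate permutation $\nu_1$ still satisfies the hypotheses of Lemma~\ref{lem2:11}, and that the order of conjugation matches the composition convention (left to right). The latter is harmless because $t$ and $s$ commute, so $ts=st$ and either order gives the same $\delta\nu\delta^{-1}$.
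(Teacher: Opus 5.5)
Your proof is correct and matches the paper's argument: you write $\delta\nu\delta^{-1}=t(s\nu s)t$, apply Lemma~\ref{lem2:11} to each single-transposition conjugation, and finish with the triangle inequality. You also check explicitly that $s\nu s$ is again a fixed-point-free involution, which justifies the second application of Lemma~\ref{lem2:11}; the paper leaves that step implicit.
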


\begin{proof}
Since $t$ and $s$ are disjoint transpositions, they are commuting involutions. Hence
$\delta\nu\delta^{-1}=t(s\nu s)t$.
By Lemma~\ref{lem2:11}, conjugation by each of $s$ and $t$ changes the number of components by at most $1$. By the triangle inequality,
$\bigl|c(\mu,\delta\nu\delta^{-1})-c(\mu,\nu)\bigr|\leq 2$. The proof is complete.
\end{proof}

\subsection{Face Count}
 Now we consider any embedding $(\theta,P,Q)$ of the pre-signed graph $\Sigma_P=(\theta,P,\mathbf{\Pi})$ with $Q\in\mathcal B(\mathbf{\Pi})$. We define
$$ \mu_P:=P\theta,
  \qquad
  \nu_Q:=\theta Q.$$
From the definition of the pre-signed graph, we have $P\theta=\theta P^{-1}$. Thus
\[
(P\theta)^2=P(\theta P\theta)=PP^{-1}=\iota.
\]Moreover, $a$ and $\theta a$ belong to distinct cycles of $P$,
so $P(\theta a)\neq a$ for every $a$. Hence $P\theta$ is
fixed-point-free, and therefore $\mu_P$ is a
fixed-point-free involution. Similarly, from the definition of the bi-rotation system, we can obtain that $\nu_Q$ is also a
fixed-point-free involution.

\begin{lemma}\label{lem2:13}
If $\nu_0=\theta Q_0$, $\nu_A=\theta Q_A$ and $\nu_B=\theta Q_B$, then
$$\nu_A=g_A\nu_0g_A^{-1},~~~~\mbox{and}~~~~\nu_A=(g_Ag_B^{-1})\nu_B(g_Ag_B^{-1})^{-1}.$$
\end{lemma}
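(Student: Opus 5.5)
The plan is to reduce both identities to Lemma~\ref{lem2:7} together with one structural fact: every bi-transposition commutes with $\theta$. The only point needing care is this commutation, and even that is a short check.

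\textbf{Step 1 ($\theta$ commutes with each $\delta_i$).} Each $\delta_i$ has the form $(a\,b)(\theta a\,\theta b)$ with $a,b$ in the same $\Pi_v$, and $\theta a\neq b$, since $a,b\in F$ and $\theta a\in F_\theta$. Because $\theta$ is an involution, conjugating by $\theta$ relabels symbols by $\theta$ and gives $\theta\delta_i\theta=(\theta a\,\theta b)(a\,b)=\delta_i$. Hence $\theta\delta_i=\delta_i\theta$. Since $g_A$ is a product of $\delta_i$'s, it follows that $\theta g_A=g_A\theta$, and likewise for $g_A^{-1}$, $g_B$ and $g_B^{-1}$. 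The $\delta_i$ are fixed permutations determined by $Q_0$ and the exchange positions, so this holds regardless of $A$. The left-to-right composition convention plays no role, because only commutation and conjugation are used.

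\textbf{Step 2 (first identity).} By Lemma~\ref{lem2:7}, $Q_A=g_AQ_0g_A^{-1}$. Using Step 1,
\[
\nu_A=\theta Q_A=\theta g_AQ_0g_A^{-1}=g_A\theta Q_0g_A^{-1}=g_A\nu_0g_A^{-1}.
\]

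\textbf{Step 3 (second identity).} Apply Step 2 to $B$ to get $\nu_B=g_B\nu_0g_B^{-1}$, so $\nu_0=g_B^{-1}\nu_Bg_B$. Substituting this into Step 2 gives
\[
\nu_A=g_Ag_B^{-1}\nu_Bg_Bg_A^{-1}=(g_Ag_B^{-1})\nu_B(g_Ag_B^{-1})^{-1}.
\]
By Lemma~\ref{lem2:8}, this conjugating element is $\widehat\delta_{j_1}\cdots\widehat\delta_{j_r}$, which is what later sections need.
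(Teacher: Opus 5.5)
Your proof is correct and follows the paper's own argument. Both use the fact that every bi-transposition commutes with $\theta$: the paper simply asserts this, while you verify it explicitly via $\theta\delta_i\theta=\delta_i$. Both then apply Lemma~\ref{lem2:7} for the first identity, and obtain the second by substituting $\nu_B=g_B\nu_0g_B^{-1}$, i.e.\ $\nu_0=g_B^{-1}\nu_Bg_B$, into it.
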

\begin{proof}
Since every bi-transposition $\delta_i$ commutes with $\theta$, so do $g_A$ and $g_B$. Combining this with Lemma \ref{lem2:7}, we have
$$\nu_A=\theta g_AQ_0g_A^{-1} =g_A(\theta Q_0)g_A^{-1} =g_A\nu_0g_A^{-1},$$and
$$\nu_B=\theta g_BQ_0g_B^{-1} =g_B(\theta Q_0)g_B^{-1} =g_B\nu_0g_B^{-1}.$$

Then $$\nu_A=(g_Ag_B^{-1})(g_B\nu_0g_B^{-1})(g_Ag_B^{-1})^{-1} =(g_Ag_B^{-1})\nu_B(g_Ag_B^{-1})^{-1}.$$
The proof is complete.
\end{proof}

\begin{lemma}\label{lem2:14} The number of faces in any embedding $(\theta,P,Q)$ of a pre-signed graph $\Sigma_P$ is $$f(Q)=c(\mu_P,\nu_Q).$$
\end{lemma}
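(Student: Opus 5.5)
We must prove $f(Q)=c(\mu_P,\nu_Q)$. We already know $f(Q)=\lVert PQ\rVert/2$ from \cite{Chen2026PreSigned}, and $c(\mu,\nu)=\frac12\lVert\mu\nu\rVert$ for any two fixed-point-free involutions $\mu,\nu$ on the same finite set. It therefore suffices to show $\lVert\mu_P\nu_Q\rVert=\lVert PQ\rVert$. Once this identity holds, the lemma is immediate.

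Compute directly:
\[
\mu_P\nu_Q=(P\theta)(\theta Q)=P\theta^2 Q=PQ,
\]
since $\theta$ is an involution, so $\theta^2=\iota$. Hence $\mu_P\nu_Q$ and $PQ$ are the same permutation, and in particular they have the same number of cycles.

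Before applying the formula $c(\mu,\nu)=\frac12\lVert\mu\nu\rVert$, we must check its hypotheses: both $\mu_P$ and $\nu_Q$ must be fixed-point-free involutions on $\mathbf F$. For $\mu_P$ this was verified just before Lemma~\ref{lem2:13}. For $\nu_Q$, the relation $\theta Q=Q^{-1}\theta$ gives
\[
(\theta Q)^2=\theta Q\,\theta Q=\theta\,\theta Q^{-1} Q=\iota.
\]
Fixed-point-freeness holds because $a$ and $\theta a$ lie in different cycles of a bi-rotation. So $\theta Q(a)$ is obtained by applying $\theta$ to an element of the cycle of $Q$ containing $a$. That image lies in the partner cycle, which does not contain $a$, so $\theta Q(a)\neq a$.

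With the hypotheses in place, $c(\mu_P,\nu_Q)=\frac12\lVert\mu_P\nu_Q\rVert=\frac12\lVert PQ\rVert=f(Q)$. The only point needing care is the left-to-right product convention for composing permutations. This does not matter here: $\theta^2=\iota$ cancels in either convention, and in any case $\mu\nu$ and $\nu\mu$ are conjugate, so they have the same number of cycles. There is no real obstacle; the whole argument is this one-line cancellation.
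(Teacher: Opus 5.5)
Your proof is correct and follows the paper's argument: both cite $\lVert PQ\rVert=2f(Q)$ from \cite{Chen2026PreSigned}, use $c(\mu,\nu)=\frac12\lVert\mu\nu\rVert$, and cancel $\theta^2=\iota$ in $(P\theta)(\theta Q)=PQ$. Your explicit check that $\nu_Q=\theta Q$ is a fixed-point-free involution (via $Q\theta=\theta Q^{-1}$ and the fact that $a$ and $\theta a$ lie in partner cycles) fills in the step the paper only asserts with ``similarly.''
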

\begin{proof}
From Proposition~6 in ~\cite{Chen2026PreSigned}, we have
$\lVert PQ\rVert=2f(Q).$ On the other hand, $$2c(\mu_P,\nu_Q)=\lVert\mu_P\nu_Q\rVert= \lVert(P\theta)(\theta Q)\rVert=\lVert PQ\rVert.$$ The lemma follows.
\end{proof}

 \begin{corollary}
\label{cor2:15}
If $A\triangle A'\subseteq\mathcal P$ has exactly one element, then
$|f(Q_A)-f(Q_{A'})|\leq 2$.
\end{corollary}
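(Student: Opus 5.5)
By Lemma~\ref{lem2:14}, the number of faces of an embedding is a component count, $f(Q)=c(\mu_P,\nu_Q)$, where $\mu_P=P\theta$ depends only on the pre-signed graph and not on the bi-rotation system. So I will compare $c(\mu_P,\nu_{Q_A})$ with $c(\mu_P,\nu_{Q_{A'}})$ using Corollary~\ref{cor2:12}. For that I need to express $\nu_{Q_A}$ as a conjugate of $\nu_{Q_{A'}}$ by a single bi-transposition.

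Suppose $A\triangle A'=\{p_j\}$. Apply Lemma~\ref{lem2:8} with $B=A'$. This gives
$$g_Ag_{A'}^{-1}=\widehat\delta_j, \qquad \widehat\delta_j=g_{A'}^{(j-1)}\delta_j\bigl(g_{A'}^{(j-1)}\bigr)^{-1}.$$
By Lemma~\ref{lem2:13} it follows that
$$\nu_{Q_A}=\widehat\delta_j\,\nu_{Q_{A'}}\,\widehat\delta_j^{-1}.$$

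Next I check that $\widehat\delta_j$ has the form $ts$ with $t,s$ disjoint transpositions. By definition, $\delta_j=(a\,b)(\theta a\,\theta b)$ with $a\neq b$. These two transpositions are disjoint: $\theta a\ne a,b$ because $\theta$ interchanges $F$ and $F_\theta$ and $a,b$ lie in the same half of $\mathbf F$ (both in the first cycle of the initial bi-rotation). Conjugating by the permutation $h=g_{A'}^{(j-1)}$ gives
$$\widehat\delta_j=(ha\,hb)(h\theta a\,h\theta b).$$
Conjugation preserves cycle type, so this is again a product of two disjoint transpositions. Since $h$ commutes with $\theta$, it is in fact again a bi-transposition.

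Corollary~\ref{cor2:12}, applied with $\mu=\mu_P$, $\nu=\nu_{Q_{A'}}$ and $\delta=\widehat\delta_j$, then yields
$$|f(Q_A)-f(Q_{A'})|=\bigl|c(\mu_P,\nu_{Q_A})-c(\mu_P,\nu_{Q_{A'}})\bigr|\le 2.$$
I do not expect a genuine obstacle here. The only points needing care are the disjointness of the two transpositions in $\delta_j$ and the fact that it survives conjugation, and both are handled above.
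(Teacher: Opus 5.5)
Your proposal is correct and follows the paper's proof: Lemma~\ref{lem2:8} gives $g_Ag_{A'}^{-1}=\widehat\delta_j$, Lemma~\ref{lem2:13} turns this into $\nu_{Q_A}=\widehat\delta_j\nu_{Q_{A'}}\widehat\delta_j^{-1}$, and Corollary~\ref{cor2:12} together with Lemma~\ref{lem2:14} finishes the argument. Your check that the two transpositions in $\delta_j$ are disjoint (because $a,b\in F$ while $\theta a,\theta b\in F_\theta$) fills in a point the paper only asserts. Under the paper's left-to-right convention the conjugate is $(a h^{-1}\,b h^{-1})(\theta a h^{-1}\,\theta b h^{-1})$ rather than $(ha\,hb)(h\theta a\,h\theta b)$, but this does not matter because only the cycle type is used.
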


\begin{proof}
We assume $A$ and $A'$ differ only at $p_j$, and further assume $p_j\notin A$ and $p_j\in A'$.
Let $g_A=rs$ and $g_{A'}=r\delta_j s$, where $r$ is the product of the corresponding bi-transpositions
before $p_j$ and $s$ is the product of the corresponding bi-transpositions after
$p_j$. From Lemma \ref{lem2:8}, we have
$$g_{A'}g_A^{-1}=\widehat\delta_j=r\delta_jr^{-1}.$$
Let $h=r\delta_jr^{-1}$, since $\delta_j$ is a bi-transposition, so is
$h$. By Lemma~\ref{lem2:13},
$\nu_{A'}=h\nu_Ah^{-1}$. Then, from
Corollary~\ref{cor2:12},
$\bigl|c(\mu,\nu_{A'})-c(\mu,\nu_A)\bigr|\leq2$.
Combining this with Lemma~\ref{lem2:14},
$|f(Q_{A'})-f(Q_A)|\leq2$ follows.
\end{proof}

For $i\in\{0,1\}$, we define
$\mathcal F_i = \{f(Q):Q\in B(\Pi),\ f(Q)\equiv i\pmod 2\}.$
For each nonempty set $\mathcal F_i$, we denote its minimum and maximum elements by
$m_i=\min\mathcal F_i$ and $M_i=\max\mathcal F_i$, respectively.

\begin{remark}\label{re2:16}
Given a connected signed graph $\Sigma$, by Euler's formula, the Euler-genus of an embedding depends on its number of faces, so the Euler-genus spectrum problem is equivalent to the problem of the possible numbers of faces of all its embeddings. More precisely, Theorem \ref{t1:3} is equivalent to:
\begin{center}
(a) If both $\mathcal F_0$ and $\mathcal F_1$ are nonempty, then $|m_0-m_1|=1.$
\end{center}
And Theorem \ref{t1:4} is equivalent to:
\begin{center}
(b) For $i\in\{0,1\}$, if $\mathcal F_i$ is nonempty, then $\mathcal F_i=\{m_i, m_i+2, \ldots, M_i\}$.\end{center}
In the following two sections, we will prove Theorems \ref{t1:3} and \ref{t1:4} by studying $\mathcal F_i$.
\end{remark}

\section{The proof of  Theorem \ref{t1:3}}\label{Sec:3}
In this section, we will prove Theorem \ref{t1:3}. We use the same notation as in Section 2. For simplicity, we simplify some of the notation. We abbreviate $\mu_P$ as $\mu$, since $P$ does not change herein. For any bi-rotation system $Q_A$ obtained from the initial bi-rotation system $Q_0$ by adjacent-exchange operations in $A$, $A\subseteq\mathcal P$, we identify the ordered set $\mathcal P=\{p_1,p_2,\ldots,p_m\}$ with $\{1,2,\ldots,m\}$ via $p_i\leftrightarrow i$,
and correspondingly, elements in
$A$ are also denoted by subscripts.
We also abbreviate $c(\mu,\nu_A)$ as $c(A)$, $c(\mu,\nu_0)$ as $c(\varnothing)$.

The following lemma is the main tool in this section.

\begin{lemma}
\label{lem3:1}
For $A\subseteq \mathcal P$, we assume $c(\varnothing)=\min_A c(A)$. If $c(A)-c(\varnothing)$ is odd for some $A$, then there exists $A_0\subseteq\mathcal P$ such that $c(A_0)=c(\varnothing)+1$.
\end{lemma}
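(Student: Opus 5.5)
We argue by a walk along a chain of subsets, choosing the first subset at which the face count changes parity. Fix $A$ with $c(A)-c(\varnothing)$ odd, and list its elements in the prescribed order, $A=\{j_1\prec j_2\prec\cdots\prec j_r\}$. Consider the nested chain
\[
\varnothing=A_{(0)}\subset A_{(1)}\subset\cdots\subset A_{(r)}=A,\qquad A_{(t)}=\{j_1,\ldots,j_t\}.
\]
Consecutive members of this chain differ in exactly one element. So, as in the proof of Corollary~\ref{cor2:15}, we have $\nu_{A_{(t)}}=h_t\nu_{A_{(t-1)}}h_t^{-1}$ with $h_t=\widehat\delta_{j_t}$ a bi-transposition (Lemmas~\ref{lem2:8} and \ref{lem2:13}). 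Write $h_t=(x\,y)(\theta x\,\theta y)$ and split the conjugation into two single transpositions. Lemma~\ref{lem2:11} then shows that each transposition changes $c$ by $-1$, $0$ or $+1$. Consequently $c(A_{(t)})-c(A_{(t-1)})\in\{-2,-1,0,1,2\}$.

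Since the total change $c(A)-c(\varnothing)$ is odd, some step must have an odd increment. Let $t$ be the first index at which $c(A_{(t)})-c(\varnothing)$ is odd. Then $c(A_{(t-1)})-c(\varnothing)$ is even, and the step from $A_{(t-1)}$ to $A_{(t)}$ changes $c$ by $\pm1$.

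The difficulty is that we want the value $c(\varnothing)+1$ exactly, while the chain may first rise well above the minimum. A naive chain gives only an odd value, not necessarily $c(\varnothing)+1$. To fix this, we exploit minimality and choose the data cleverly. Among all pairs $(B,B')$ of subsets of $\mathcal P$ such that
\begin{itemize}
\item[(i)] $B\triangle B'$ is a single element, and
\item[(ii)] $c(B)$ is even-offset (i.e.\ $c(B)-c(\varnothing)$ is even) while $c(B')-c(\varnothing)$ is odd,
\end{itemize}
pick one minimizing $c(B)$. Such pairs exist by the chain argument above. The goal is to show that this minimum forces $c(B)=c(\varnothing)$ and $c(B')=c(\varnothing)+1$.

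The main obstacle is a descent step. Suppose $c(B)>c(\varnothing)$. Walk from $B$ toward $\varnothing$ through single-element changes (for instance, deleting the elements of $B$ from the last one backward). All members of this path have even offset, or else an earlier odd crossing yields a pair with smaller even value. We then need a commutation argument: the odd flip available at $B$ via the element $j$ should remain available at a neighbour $B\setminus\{k\}$ of smaller $c$. The natural tool here is the bipartition-class criterion of Lemma~\ref{lem2:11}. A single bi-transposition $(x\,y)(\theta x\,\theta y)$ produces an odd change exactly when one of its two transpositions acts within a component across opposite classes (change $0$) while the other changes $c$ by $\pm1$. The task is to check that this configuration persists, or reappears, after undoing an even step that decreases $c$. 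I expect this persistence to be the hard part, and I would attack it by case analysis on whether $x,y,\theta x,\theta y$ share components of $\mathcal H(\mu,\nu_B)$.

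Once the descent is settled, the minimal pair has $c(B)=c(\varnothing)$. Then $c(B')\in\{c(\varnothing)\pm1\}$, and $c(B')\ge c(\varnothing)$ by the minimality hypothesis. Hence $A_0=B'$ satisfies $c(A_0)=c(\varnothing)+1$.
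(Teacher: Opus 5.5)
\paragraph{There is a genuine gap: the descent step is not proved.} Your chain argument is fine, and so is the last step (once an adjacent pair with $c(B)=c(\varnothing)$ is found, $c(B')=c(\varnothing)+1$ follows). But these are the easy parts. The whole content of the lemma is your claim that the minimizing pair has $c(B)=c(\varnothing)$, and you leave this unproved. What you do sketch has two concrete problems.

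\emph{(a) The path step is unjustified.} The sentence ``all members of this path have even offset, or else an earlier odd crossing yields a pair with smaller even value'' does not follow, because a path from $B$ to $\varnothing$ need not be monotone in $c$. Nothing prevents values such as $c(B),\,c(B)+2,\,c(B)+3,\,c(B)+2,\,c(B),\dots$. There the only odd crossings have even value $c(B)+2$, which does not contradict your minimality.

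\emph{(b) Persistence is not a routine case check.} The bi-transposition realizing $j$ at $B$ is $\widehat\delta_j=g_B^{(j-1)}\delta_j\bigl(g_B^{(j-1)}\bigr)^{-1}$, and this changes when you delete some $k\prec j$. A step lowering $c$ by $2$ merges components, which reshuffles exactly the component and bipartition-class data that Lemma~\ref{lem2:11} depends on. Nothing local forces an odd configuration to survive. You flag this as the hard part; it is in fact all of the proof. (The intermediate statement you aim for is true, since the paper's argument does produce an adjacent pair with values $c(\varnothing)$ and $c(\varnothing)+1$. It just does not come from this kind of local descent.)

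\paragraph{The missing idea is a global parity invariant.} Keep the base point at the minimum and look at its singleton neighbours; each has $c(\{i\})-c(\varnothing)\in\{0,1,2\}$. Suppose every singleton gave $+2$. Then by Lemma~\ref{lem2:11}, each of the two transpositions of every $\delta_i$ swaps two elements of the same class of a fixed bipartition $(U,V)$ of $\mathcal H(\mu,\nu_0)$. Consequently:
\begin{itemize}
\item every $g_A$ preserves $U$ and $V$, so every $\nu_A$ is a $U$--$V$ matching;
\item $c(A)=\lVert\pi_A\rVert$ for the permutation $\pi_A=g_{A,U}\,\pi_\varnothing\,\phi g_{A,V}^{-1}\phi^{-1}$ of $U$;
\item since $g_A$ is a product of bi-transpositions it is even, so $\operatorname{sgn}\pi_A=\operatorname{sgn}\pi_\varnothing$.
\end{itemize}
This forces $c(A)\equiv c(\varnothing)\pmod 2$ for every $A$, contradicting the hypothesis. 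Hence some singleton gives $+1$, and you are done, or some singleton gives $0$. In the latter case, fix that coordinate: either omit it, or re-root at $\{i\}$, which still attains the minimum. Lemma~\ref{lem2:8} shows the remaining family is again an ordered family of bi-transpositions, so you can induct on $m$. The base point never leaves the minimum value, which is precisely what your descent would have to control and cannot.
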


\begin{proof}
We prove by induction on $m$, the number of adjacent-exchange operations in $\mathcal P$.
Each adjacent-exchange operation corresponds to
a bi-transposition $\delta_i$ which is a product of two disjoint transpositions.
So we let $\delta_i=t_is_i$, where $t_i$ and $s_i$ are disjoint transpositions, for $1\leq i\leq m$.

For $m=1$, the only subsets are $\varnothing$ and $\{1\}$. Since $c(\varnothing)$ is minimal, and by Corollary~\ref{cor2:12}, a bi-transposition changes $c$ by at most $2$, it follows that $c(\{1\})-c(\varnothing)\in\{0,1,2\}$. If this difference is odd, then we have $c(\{1\})-c(\varnothing)=1$. We now take $A_0=\{1\}$, the result holds for this case.

Now suppose that $m\geq2$ and that the result holds for $m-1.$
Again by
Corollary~\ref{cor2:12}, for every $i$ we have
$c(\{i\})-c(\varnothing)\in\{0,1,2\}$.
If $c(\{i\})=c(\varnothing)+1$ for some $i$, then we may take
$A_0=\{i\}$. Thus, we may assume that no singleton has $c(\{i\})=c(\varnothing)+1$.

\medskip
\noindent\textbf{Claim.}
There exists an index $i\in\{1,\ldots,m\}$ such that
$c(\{i\})=c(\varnothing)$.

\smallskip
\noindent\textit{Proof of the claim.}
Suppose for contradiction that
\begin{equation}
c(\{i\})=c(\varnothing)+2
\qquad (1\leq i\leq m).
\label{eq2}
\end{equation}

Since $\mathcal H(\mu,\nu_0)$ is bipartite, choose a bipartition
$(U,V)$ of its vertex set. For each $i$, conjugation by
$\delta_i=t_is_i$ may be performed as two successive conjugations,
first by $t_i$ and then by $s_i$, since
$\delta_i\nu_0\delta_i^{-1}=s_i(t_i\nu_0t_i)s_i$.
By \eqref{eq2} and
Lemma~\ref{lem2:11}, each of the two conjugations must
increase the number of components by exactly $1$.

Applying Lemma~\ref{lem2:11} to the first
conjugation, the two entries of $t_i$ lie in the same bipartition
class. Hence $t_i(U)=U$ and $t_i(V)=V$. It follows that
$t_i\nu_0t_i$ still pairs $U$ with $V$, so $(U,V)$ remains a
bipartition of $\mathcal H(\mu,t_i\nu_0t_i)$.
Applying Lemma~\ref{lem2:11} to the second
conjugation, the two entries of $s_i$ also lie in the same
bipartition class. Hence $s_i(U)=U$ and $s_i(V)=V$.
Therefore $\delta_i(U)=U$ and $\delta_i(V)=V$ for every $i$, and
consequently $g_A(U)=U$ and $g_A(V)=V$ for every $A$.
Thus every $\nu_A$ is a perfect matching between $U$ and $V$.

Let $\phi:U\to V$ and $\psi_A:U\to V$ be the bijections induced by
$\mu$ and $\nu_A$, respectively, and let
$\pi_A=\psi_A\phi^{-1}$. Then $\pi_A$ is a permutation of $U$.
Applying $\pi_A$ means first following a $\nu_A$-edge from $U$ to
$V$ and then a $\mu$-edge from $V$ back to $U$. Hence each cycle of
$\pi_A$ corresponds to exactly one component of
$\mathcal H(\mu,\nu_A)$. Therefore
$c(A)=\lVert\pi_A\rVert$.

Since $g_A$ preserves $U$ and $V$, let $g_{A,U}$ and $g_{A,V}$ denote
its restrictions to $U$ and $V$, respectively. From
$\nu_A=g_A\nu_0g_A^{-1}$, we obtain
$\psi_A=g_{A,U}\psi_0g_{A,V}^{-1}$ (Indeed, for \(u\in U\), we have \(ug_A=ug_{A,U}\), then \((ug_{A,U})\nu_0=(ug_{A,U})\psi_0\in V\), and \(g_A^{-1}\) acts on \(V\) as \(g_{A,V}^{-1}\). Hence \(\psi_A=g_{A,U}\psi_0g_{A,V}^{-1}\).). In other words, starting from a vertex in $U$, we first rearrange the vertices of $U$ by $g_A$, then follow the $\nu_0$-matching to $V$, and finally rearrange the vertices of $V$ by $g_A^{-1}$.   Using $\pi_A=\psi_A\phi^{-1}$ and
$\pi_\varnothing=\psi_0\phi^{-1}$, we have
$\pi_A=g_{A,U}\psi_0g_{A,V}^{-1}\phi^{-1}
=g_{A,U}\psi_0(\phi^{-1}\phi)g_{A,V}^{-1}\phi^{-1}
=g_{A,U}\pi_\varnothing
\bigl(\phi g_{A,V}^{-1}\phi^{-1}\bigr)$.

Each $\delta_i$ is a product of two transpositions and is therefore an even permutation. Since $g_A$ is a product of bi-transpositions, it is also even, so $\operatorname{sgn}(g_A)=1$. Since $g_A$ preserves $U$ and $V$, we have $g_A=g_{A,U}g_{A,V}$, where $g_{A,U}$ is equal to $g_A$ on $U$ and fixes every element of $V$, while $g_{A,V}$ is equal to $g_A$ on $V$ and fixes every element of $U$. Hence
$\operatorname{sgn}(g_A)=\operatorname{sgn}(g_{A,U})\operatorname{sgn}(g_{A,V})$.
Therefore,
$\operatorname{sgn}(g_{A,U})\operatorname{sgn}(g_{A,V})=1$.

Moreover, $\phi g_{A,V}^{-1}\phi^{-1}$ has the same sign as
$g_{A,V}$. It follows that
$\operatorname{sgn}(\pi_A)=\operatorname{sgn}(\pi_\varnothing)$
for every $A$. Recall that, for any permutation $\pi$ of $U$,
$\operatorname{sgn}(\pi)=(-1)^{|U|-\lVert\pi\rVert}$.
Therefore
$\lVert\pi_A\rVert\equiv\lVert\pi_\varnothing\rVert\pmod 2$.
Since $c(A)=\lVert\pi_A\rVert$, we obtain
$c(A)\equiv c(\varnothing)\pmod 2$ for every $A$, contradicting the
assumption that an odd difference occurs. Hence some singleton
satisfies $c(\{i\})=c(\varnothing)$. This proves the claim. \qed

\medskip
Fix an index $i$ given by the claim, and choose
$A\subseteq\{1,\ldots,m\}$ such that
$c(A)-c(\varnothing)$ is odd. We distinguish two cases.

\medskip
\noindent\textbf{Case 1. $i\notin A$.}

Let the restricted family
$\mathcal F_i^-=\{X\subseteq\{1,\ldots,m\}:i\notin X\}$.
Since the exchange operation $i$ is fixed to be omitted, the free
exchange operations are precisely those indexed by
$\mathcal P\setminus\{i\}$, with the order inherited from
$\mathcal P$. Thus
$\mathcal F_i^-=2^{\mathcal P\setminus\{i\}}$.
The ordered set $\mathcal P\setminus\{i\}$ contains $m-1$ exchange
operations, so $\mathcal F_i^-$  contains $2^{m-1}$
members.

Since $c(\varnothing)$ is a global minimum, it is also a minimum in
this restricted family. Moreover, $A\in\mathcal F_i^-$ and
$c(A)-c(\varnothing)$ is odd. Therefore the induction hypothesis
applies to the ordered set $\mathcal P\setminus\{i\}$, which has
$m-1$ exchange operations. Hence there exists
$A_0\in\mathcal F_i^-$ such that
$c(A_0)=c(\varnothing)+1$.

\medskip
\noindent\textbf{Case 2. $i\in A$.}

Let $\mathcal F_i^+=\{X\subseteq\{1,\ldots,m\}:i\in X\}$ and
$B=\{i\}$. For every $X\in\mathcal F_i^+$, we have $B\subseteq X$
and $X\triangle B=X\setminus\{i\}$. Hence
$i\notin X\triangle B$. As $X$ ranges over
$\mathcal F_i^+$, the set $X\triangle B$ ranges over all subsets of
$\mathcal P\setminus\{i\}$. Thus $\mathcal F_i^+$ is in one-to-one
correspondence with $2^{\mathcal P\setminus\{i\}}$. The ordered set
$\mathcal P\setminus\{i\}$ contains $m-1$ exchange operations, so
$\mathcal F_i^+$ contains $2^{m-1}$ members.

If $X\triangle B=\{j_1,\ldots,j_r\}$ with
$j_1<\cdots<j_r$, then Lemma~\ref{lem2:8} gives
$g_Xg_B^{-1}
=\widehat\delta_{j_1}\cdots\widehat\delta_{j_r}$.
Since $i\notin X\triangle B$, only the exchange operations indexed by
$\mathcal P\setminus\{i\}$ occur in this product. By
Lemma~\ref{lem2:8}, each $\widehat\delta_j$ is again a
bi-transposition. Hence, taking $B$ as the initial member, the free
exchange operations are indexed by the ordered set
$\mathcal P\setminus\{i\}$.

Since $i\in A$, we have $A\in\mathcal F_i^+$ and
$A\triangle B\subseteq\mathcal P\setminus\{i\}$. Moreover, by the
preceding argument, $c(B)=c(\varnothing)$, and hence
$c(A)-c(B)=c(A)-c(\varnothing)$ is odd. Since
$c(\varnothing)$ is a global minimum, $c(B)$ is also a minimum among
the members of $\mathcal F_i^+$.

Therefore the induction hypothesis applies to the ordered set
$\mathcal P\setminus\{i\}$, which has $m-1$ exchange operations. Hence
there exists a subset $D_0\subseteq\mathcal P\setminus\{i\}$ for which
the corresponding member has component number $c(B)+1$. Setting
$A_0=D_0\cup\{i\}$, we have $A_0\in\mathcal F_i^+$ and
$c(A_0)=c(B)+1=c(\varnothing)+1$.

Thus, in both cases, the required set $A_0$ exists.  The proof is complete.
\end{proof}

\begin{lemma}
\label{lem3:2}
For $A\subseteq \mathcal P$, we assume $f(Q_0)=\min_A f(Q_A)$. If $f(Q_A)-f(Q_0)$ is odd for some $A$, then there exists $A_0\subseteq\mathcal P$ such that $f(Q_{A_0})=f(Q_0)+1$.
\end{lemma}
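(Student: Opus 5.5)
Lemma~\ref{lem3:2} is the face-count version of Lemma~\ref{lem3:1}, so the plan is to transfer that lemma through Lemma~\ref{lem2:14}. No new combinatorics should be needed.

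First, fix the pre-signed graph $\Sigma_P=(\theta,P,\mathbf{\Pi})$ and the initial bi-rotation system $Q_0$. Set $\mu=\mu_P=P\theta$, which does not depend on $A$. For each $A\subseteq\mathcal P$ set $\nu_A=\theta Q_A$. By Lemma~\ref{lem2:14},
$$f(Q_A)=c(\mu,\nu_A)=c(A)\qquad\text{for every } A\subseteq\mathcal P,$$
and in particular $f(Q_0)=c(\varnothing)$.

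Next, translate the hypotheses. The assumption $f(Q_0)=\min_A f(Q_A)$ becomes $c(\varnothing)=\min_A c(A)$. The existence of some $A$ with $f(Q_A)-f(Q_0)$ odd becomes the existence of some $A$ with $c(A)-c(\varnothing)$ odd. These are exactly the hypotheses of Lemma~\ref{lem3:1}, applied to the same ordered family $\mathcal P$ of adjacent-exchange operations.

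Lemma~\ref{lem3:1} then gives $A_0\subseteq\mathcal P$ with $c(A_0)=c(\varnothing)+1$. Converting back with Lemma~\ref{lem2:14} yields $f(Q_{A_0})=f(Q_0)+1$, which is the claim.

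There is no real obstacle, but one point deserves an explicit remark. Lemma~\ref{lem3:1} is stated relative to the fixed ordered set $\mathcal P$ and the fixed initial system $Q_0$, and $Q_{A_0}$ must be read in that same setup, i.e.\ as the system $g_{A_0}Q_0g_{A_0}^{-1}$ of Lemma~\ref{lem2:7}. With that reading, the statement follows directly. When the lemma is applied later, one will want every bi-rotation system to be some $Q_A$; I would handle that separately, by choosing $\mathcal P$ long enough (for instance, a bubble-sort sequence at each vertex) so that every bi-rotation is reachable.
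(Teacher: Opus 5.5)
Your proposal is correct and is essentially the paper's proof. Like the paper, you identify $f(Q_A)$ with $c(A)=c(\mu,\nu_A)$ via Lemma~\ref{lem2:14}, translate the minimality and odd-difference hypotheses, and invoke Lemma~\ref{lem3:1}; the paper additionally cites Lemma~\ref{lem2:13} to record $\nu_A=g_A\nu_0g_A^{-1}$, which is exactly the setting Lemma~\ref{lem3:1} works in. Your closing remark about making every bi-rotation system reachable is unnecessary: the later applications only use a family joining two prescribed systems (e.g.\ $Q_{\min}$ and one system of the opposite parity in Theorem~\ref{t1:3}), and the minimum over such a family is automatically the global minimum.
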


\begin{proof}
By Lemma~\ref{lem2:13}, for each $A\subseteq\mathcal P$, we have
$\nu_A=g_A\nu_0g_A^{-1}$. By Lemma~\ref{lem2:14},
$f(Q_A)=c(A)$. Since $Q_0$ has the minimum number of faces, the
corresponding component number $c(\varnothing)$ is also a minimum.

By assumption, $f(Q_A)-f(Q_0)=c(A)-c(\varnothing)$ is odd for some
$A\subseteq\mathcal P$. Therefore, Lemma~\ref{lem3:1} gives some
$A_0\subseteq\mathcal P$ such that
$c(A_0)=c(\varnothing)+1$, equivalently,
$f(Q_{A_0})=f(Q_0)+1$. The lemma follows.
\end{proof}

\begin{proof}[\bf{Proof of Theorem \ref{t1:3}}]
 Let $Q_{\min}\in\mathcal B(\mathbf{\Pi})$ be an embedding achieving the minimum face number, and suppose that $f(Q_{\min})\equiv i\pmod{2}$. Since the opposite parity class is nonempty, there exists an embedding $Q'\in\mathcal B(\mathbf{\Pi})$ satisfying $f(Q')\equiv 1-i\pmod{2}$. By Lemma~\ref{lem2:7}, there is an ordered adjacent-exchange family $\{Q_A:A\subseteq\mathcal P\}$ with $Q_\varnothing=Q_{\min}$ and $Q_{\mathcal P}=Q'$.

In this family, $Q_\varnothing$ has the minimum face number, while $Q_{\mathcal P}$ has the opposite parity. Lemma~\ref{lem3:2} guarantees some $A\subseteq\mathcal P$ for which $f(Q_A)=f(Q_\varnothing)+1=m_i+1$, and this value has parity $1-i$. Since $m_i$ is the global minimum face number, every face number of parity $1-i$ is at least $m_i+1$. Thus, $m_i+1$ is the minimum face number in the opposite parity class, so $m_{1-i}=m_i+1$. It follows that $|m_0-m_1|=1$. From Remark \ref{re2:16}, the Theorem \ref{t1:3} follows.
\end{proof}

\section{The Proof of Theorem \ref{t1:4}}\label{Sec:4}

 As seen in Case~1 of the proof of Lemma~\ref{lem3:1}, if the exchange operation $p_i$ is fixed to be omitted, then all choices satisfy $i\notin A$, and the free choices come only from $\mathcal P\setminus\{i\}$. Similarly, in Case~2, if $p_i$ is fixed to be performed, then all choices satisfy $i\in A$.

More generally, let $\mathcal P_0,\mathcal P_1\subseteq\mathcal P$ be disjoint subsets, where the exchange operations in $\mathcal P_0$ are required to be omitted and those in $\mathcal P_1$ are required to be performed. A subset $A\subseteq\mathcal P$ is called \emph{valid} if $A\cap\mathcal P_0=\varnothing$ and $\mathcal P_1\subseteq A$. Thus the exchange operations in $\mathcal P\setminus(\mathcal P_0\cup\mathcal P_1)$ may be either performed or omitted.

\begin{proposition}\label{prop4:1}
 For a fixed valid set $A'$, a subset $A\subseteq\mathcal P$ is valid
if and only if $
A\triangle A'
\subseteq
\mathcal P\setminus(\mathcal P_0\cup\mathcal P_1).
$
\end{proposition}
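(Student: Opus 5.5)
The plan is to argue elementwise on $\mathcal P$, using only the definition of validity and the fact that $A'$ itself is valid. I will set $\mathcal P_2=\mathcal P\setminus(\mathcal P_0\cup\mathcal P_1)$. Since $\mathcal P_0$ and $\mathcal P_1$ are disjoint, $\mathcal P$ is the disjoint union $\mathcal P_0\sqcup\mathcal P_1\sqcup\mathcal P_2$. The statement $A\triangle A'\subseteq\mathcal P_2$ says exactly that $A$ and $A'$ agree on every element of $\mathcal P_0\cup\mathcal P_1$: for each $p\in\mathcal P_0\cup\mathcal P_1$ we have $p\in A$ if and only if $p\in A'$.

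\emph{Forward direction.} Suppose $A$ is valid and take $p\in\mathcal P_0\cup\mathcal P_1$.
\begin{itemize}
\item If $p\in\mathcal P_0$, then $p\notin A$ and $p\notin A'$, because both sets avoid $\mathcal P_0$.
\item If $p\in\mathcal P_1$, then $p\in A$ and $p\in A'$, because both sets contain $\mathcal P_1$.
\end{itemize}
In either case $p\notin A\triangle A'$, so $A\triangle A'\subseteq\mathcal P_2$.

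\emph{Reverse direction.} Suppose $A\triangle A'\subseteq\mathcal P_2$. Then $A$ and $A'$ agree on $\mathcal P_0\cup\mathcal P_1$.
\begin{itemize}
\item For $p\in\mathcal P_0$, validity of $A'$ gives $p\notin A'$, so $p\notin A$. Hence $A\cap\mathcal P_0=\varnothing$.
\item For $p\in\mathcal P_1$, validity of $A'$ gives $p\in A'$, so $p\in A$. Hence $\mathcal P_1\subseteq A$.
\end{itemize}
So $A$ is valid.

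There is no real obstacle here. The one point to keep in view is that validity of the fixed set $A'$ is needed in both directions. Disjointness of $\mathcal P_0$ and $\mathcal P_1$ is what makes the three-part decomposition of $\mathcal P$ hold, and it guarantees that the constraints can be satisfied at all.
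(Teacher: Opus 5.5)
Your proof is correct and follows essentially the same argument as the paper. The paper phrases the forward direction as ``$A$ and $A'$ have the same intersection with $\mathcal P_0\cup\mathcal P_1$'' and then, in the converse, uses the validity of $A'$ to transfer the constraints to $A$; your elementwise case check on $\mathcal P_0$ and $\mathcal P_1$ is the same reasoning written out in more detail.
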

\begin{proof}
Suppose first that $A$ is valid. Since both $A$ and $A'$ are valid,
we have $\mathcal P_1\subseteq A\cap A'$ and
$A\cap\mathcal P_0=A'\cap\mathcal P_0=\varnothing$. Hence $A$ and
$A'$   have the same intersection with
$\mathcal P_0\cup\mathcal P_1$, and therefore
$A\triangle A'\subseteq
\mathcal P\setminus(\mathcal P_0\cup\mathcal P_1)$.

Conversely, suppose that
$A\triangle A'\subseteq
\mathcal P\setminus(\mathcal P_0\cup\mathcal P_1)$.
Then $A$ and $A'$  have the same intersection with
$\mathcal P_0\cup\mathcal P_1$. Since $A'$ is valid, it follows that
$\mathcal P_1\subseteq A$ and
$A\cap\mathcal P_0=\varnothing$. Hence $A$ is valid. The result follows.
\end{proof}

We consider the \textit{restricted family}
$\{Q_A:  A \text{ is valid}\}$.
For each $i\in\{0,1\}$, we define the set
\[
\mathcal F_i^{\mathrm{res}} = \big\{\,f(Q_A) : A \text{ is valid and } f(Q_A)\equiv i\pmod{2}\,\big\}.
\]
If $\mathcal F_i^{\mathrm{res}}$ is nonempty, we set $m_i^{\mathrm{res}} = \min\mathcal F_i^{\mathrm{res}}$ and $M_i^{\mathrm{res}} = \max\mathcal F_i^{\mathrm{res}}$.

\begin{lemma}
\label{lem4:1}
If both $\mathcal F_0^{\mathrm{res}}$ and
$\mathcal F_1^{\mathrm{res}}$ are nonempty, then
$\bigl|m_0^{\mathrm{res}}-m_1^{\mathrm{res}}\bigr|=1$.
\end{lemma}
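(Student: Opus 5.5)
We will reduce Lemma~\ref{lem4:1} to Lemma~\ref{lem3:1}, applied to a re-based ordered family. First we choose a valid set $B$ with $f(Q_B)=\min\{f(Q_A): A\text{ valid}\}$, and let $i$ be the parity of $f(Q_B)$, so that $m_i^{\mathrm{res}}=f(Q_B)$. Since the opposite class $\mathcal F_{1-i}^{\mathrm{res}}$ is nonempty, we also fix a valid $A^\ast$ with $f(Q_{A^\ast})\equiv 1-i\pmod 2$. By Proposition~\ref{prop4:1}, the valid sets are exactly the sets $A$ with $A\triangle B\subseteq\mathcal P':=\mathcal P\setminus(\mathcal P_0\cup\mathcal P_1)$. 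Moreover, $A\mapsto A\triangle B$ is a bijection from valid sets onto $2^{\mathcal P'}$, where $\mathcal P'$ carries the order inherited from $\mathcal P$.

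Next we re-express the restricted family as an ordered adjacent-exchange family based at $Q_B$, as in Case~2 of Lemma~\ref{lem3:1}. For a valid $A$ with $A\triangle B=\{j_1,\ldots,j_r\}$, Lemma~\ref{lem2:8} gives
\[
g_Ag_B^{-1}=\widehat\delta_{j_1}\cdots\widehat\delta_{j_r},\qquad \widehat\delta_j=g_B^{(j-1)}\delta_j\bigl(g_B^{(j-1)}\bigr)^{-1}.
\]
Each $\widehat\delta_j$ is a conjugate of a bi-transposition by a product of bi-transpositions, so it is again a product of two disjoint transpositions commuting with $\theta$. By Lemma~\ref{lem2:13} we have $\nu_A=(g_Ag_B^{-1})\nu_B(g_Ag_B^{-1})^{-1}$. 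The formula for $g_Ag_B^{-1}$ depends only on $D=A\triangle B$ and has the same ordered-product form as $g_D$ with $\nu_B$ in place of $\nu_0$, except that the factors $\widehat\delta_j$ are fixed and do not depend on $D$.

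The main point to check is exactly this last feature. In Lemma~\ref{lem3:1}, the base case, Corollary~\ref{cor2:12}, and the parity/sign argument in the Claim use only that each generator is a product of two disjoint transpositions acting by conjugation on a fixed-point-free involution. The induction step removes one generator and restricts to the subfamilies where it is fixed on or off. So we will observe (or restate Lemma~\ref{lem3:1} abstractly) that Lemma~\ref{lem3:1} holds for any family $c(D)=c\bigl(\mu,h_D\nu h_D^{-1}\bigr)$ with $h_D=\prod_{j\in D}\eta_j$ taken in order, where the $\eta_j$ are arbitrary fixed products of two disjoint transpositions and $D$ ranges over subsets of an ordered index set. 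This is the step we expect to need the most care, since in Case~2 the re-based generators must again be of this form. They are: $h_{D}h_{\{i\}}^{-1}$ is an ordered product of conjugates of the $\eta_j$, which are again double transpositions.

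With this, we apply the abstract Lemma~\ref{lem3:1} with base $\nu_B$, generators $\widehat\delta_j$ for $j\in\mathcal P'$, and $c(\varnothing)=f(Q_B)$ minimal by Lemma~\ref{lem2:14}. The set $D=A^\ast\triangle B$ gives an odd difference, so there is a valid $A_0$ with $f(Q_{A_0})=m_i^{\mathrm{res}}+1$. Every valid value of parity $1-i$ is at least $m_i^{\mathrm{res}}+1$ by minimality, hence $m_{1-i}^{\mathrm{res}}=m_i^{\mathrm{res}}+1$, and the lemma follows.
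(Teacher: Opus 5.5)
Your proposal is correct and follows essentially the same route as the paper. The paper picks a minimum valid set, re-bases at it via Proposition~\ref{prop4:1}, Lemma~\ref{lem2:8} and Lemma~\ref{lem2:13}, and applies Lemma~\ref{lem3:1} to the resulting family. Your explicit remark that Lemma~\ref{lem3:1} uses only that the generators are fixed products of two disjoint transpositions, so that it covers the re-based $\widehat\delta_j$, is exactly what the paper leaves implicit when it calls the restricted family an ``ordered bi-transposition family''.
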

 \begin{proof}
Choose a valid set $A'$ such that $f(Q_{A'})$ attains the minimum
face number in the restricted family, and suppose that
$f(Q_{A'})\equiv i\pmod 2$. Thus
$m_i^{\mathrm{res}}=f(Q_{A'})$.

By Proposition~\ref{prop4:1}, as $A$ ranges over all valid sets,
$S=A\triangle A'$ ranges over all subsets of
$\mathcal P\setminus(\mathcal P_0\cup\mathcal P_1)$.
By Lemma~\ref{lem2:8}, the product
$g_Ag_{A'}^{-1}$ is an ordered product of the corresponding
bi-transpositions $\widehat\delta_j$, and each
$\widehat\delta_j$ is again a bi-transposition. Moreover,
$\nu_A=(g_Ag_{A'}^{-1})\nu_{A'}(g_Ag_{A'}^{-1})^{-1}$.
Therefore, taking $A'$ as the initial member,
the restricted family is an ordered bi-transposition family
whose free choices are precisely the
exchange operations in
$\mathcal P\setminus(\mathcal P_0\cup\mathcal P_1)$.

Since $f(Q_{A'})$ is minimal in the restricted family,
Lemma~\ref{lem2:14} implies that
$c(\mu,\nu_{A'})$ is also minimal. Since both
$\mathcal F_0^{\mathrm{res}}$ and
$\mathcal F_1^{\mathrm{res}}$ are nonempty, there exists a valid set
$A$ such that $f(Q_A)$ has parity opposite to that of $f(Q_{A'})$.
Hence $f(Q_A)-f(Q_{A'})$ is odd. By Lemma~\ref{lem2:14},
$c(\mu,\nu_A)-c(\mu,\nu_{A'})$ is also odd.
Lemma~\ref{lem3:1} therefore gives a subset of the remaining exchange
operations whose corresponding component number is
$c(\mu,\nu_{A'})+1$. Let $A_0$ be the corresponding valid set.
Then, by Lemma~\ref{lem2:14},
$f(Q_{A_0})=f(Q_{A'})+1=m_i^{\mathrm{res}}+1$.
This value has parity $1-i$.
Since
$f(Q_{A'})=m_i^{\mathrm{res}}$
is the minimum face number in the entire restricted family, every face
number of parity $1-i$ is at least
$m_i^{\mathrm{res}}+1$.
 Hence
$m_{1-i}^{\mathrm{res}}=m_i^{\mathrm{res}}+1$.
Therefore,
$\bigl|m_0^{\mathrm{res}}-m_1^{\mathrm{res}}\bigr|=1$. The result follows.
\end{proof}

For integers $a\le b$ of the same parity, let $\intervaltwo{a}{b}=\{a,a+2,\ldots,b\}.$
The following lemma asserts that there are no gaps within a single parity class $\mathcal F_i^{\mathrm{res}}$.

\begin{lemma}
\label{lem4:2}
For each $i\in\{0,1\}$, if
$\mathcal F_i^{\mathrm{res}}$
is nonempty, then $\mathcal F_i^{\mathrm{res}}=\intervaltwo{m_i^{\mathrm{res}}}{M_i^{\mathrm{res}}}$.
\end{lemma}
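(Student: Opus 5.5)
We argue by induction on the number $r=|\mathcal P\setminus(\mathcal P_0\cup\mathcal P_1)|$ of free exchange operations. This is the same induction scheme used in the proof of Lemma~\ref{lem3:1}: fixing one more free operation to be omitted or performed gives a restricted family with $r-1$ free operations. By Proposition~\ref{prop4:1} and Lemma~\ref{lem2:8}, any valid set can serve as the initial member of such a family. If $r=0$, the family consists of a single embedding and there is nothing to prove. Suppose now that $r\ge1$ and that the statement holds for every restricted family with fewer free operations. Pick the largest free index $j$ in the order of $\mathcal P$. Split the valid sets into those omitting $p_j$ (family $\mathcal R^-$) and those containing $p_j$ (family $\mathcal R^+$). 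Each is a restricted family with $r-1$ free operations. By induction, within each family every parity class of face numbers is a step-two interval.

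The key step is to glue these intervals. We use Corollary~\ref{cor2:15}: toggling $p_j$ alone changes the face number by at most $2$. Thus the map $A\mapsto A\triangle\{p_j\}$ is a bijection $\mathcal R^-\to\mathcal R^+$ moving $f$ by at most $2$. Fix parity $i$ and let $I^\pm$ be the parity-$i$ parts of the face sets of $\mathcal R^\pm$. These are step-two intervals, possibly empty. If both are nonempty, we must show that $I^-\cup I^+$ has no gap. Suppose instead that $\max I^-+2<\min I^+$. Take $A\in\mathcal R^-$ with $f(Q_A)=\max I^-$ and look at its partner $A\triangle\{p_j\}\in\mathcal R^+$. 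Its face number lies within $2$ of $\max I^-$. If it has parity $i$, it is less than $\min I^+$, a contradiction. So it must have parity $1-i$, at distance $1$.

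For that remaining case we would switch to a walk along a path: order the valid sets as a Gray code, so that consecutive sets differ in exactly one free operation. Consecutive face numbers then differ by at most $2$, by Corollary~\ref{cor2:15}. This alone allows jumps of size $1$ that switch parity, so parity-$i$ values could still skip over a value. To rule that out we would use Lemma~\ref{lem2:11} more finely. A bi-transposition is a product of two transposition conjugations, each changing $c$ by $-1$, $0$, or $+1$. So a parity change of $\pm1$ comes from one step of $\pm1$ and one step of $0$, and the intermediate involution $t\nu t$ is not itself an embedding. We would need to realise it as an embedding by a different adjacent exchange at the same vertex. This is the main obstacle, and we are not sure it works.

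As an alternative that avoids this, we would induct using Lemma~\ref{lem4:1} in both directions. Apply Lemma~\ref{lem4:1} to the subfamilies, and also to the family with the roles of minimum and maximum swapped (the same proof gives $|M_0^{\mathrm{res}}-M_1^{\mathrm{res}}|=1$ once the analogue of Lemma~\ref{lem3:1} for maxima is noted). Then the ends of $I^-$ and $I^+$ are controlled: the minima and maxima of the two parity classes in each subfamily interleave with offset $1$. In particular, each $\mathcal R^\pm$ contains an odd–even adjacent pair spanning its range. The bijection above moves each element by at most $2$. Combining this with the interleaving, the extreme value $\max I^-$ should have its partner either in $I^+$ or adjacent to a parity-$1-i$ element whose own neighbours in $\mathcal R^+$ force $\max I^-+2\in I^+$. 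We would carry out this case check carefully. It is the step we expect to be the crux of the proof.
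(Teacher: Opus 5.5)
Your setup is the paper's: induction on the number of free operations, splitting the valid sets on one free $p_j$, and pairing $A$ with $A\triangle\{p_j\}$ via Corollary~\ref{cor2:15}. Your reduction of a hypothetical gap $\max I^-+2<\min I^+$ to the case where the partner $A^*=A\triangle\{p_j\}$ of a maximizer $A$ of $I^-$ has parity $1-i$ (so $f(Q_{A^*})=\max I^-\pm1$) is also correct. But the proposal stops at exactly the only nontrivial step, and neither continuation you sketch closes it.

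The Gray-code route is open by your own account. Moreover, $t\nu t$ cannot literally be an embedding: it replaces the rotation at that vertex by a permutation with two cycles, so it is not $\theta Q$ for any $Q\in\mathcal B(\mathbf{\Pi})$.

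The second route rests on a ``maximum'' analogue of Lemma~\ref{lem3:1}, namely $|M_0^{\mathrm{res}}-M_1^{\mathrm{res}}|=1$, and this is false. The proof of Lemma~\ref{lem3:1} does not dualize. If every singleton lowers $c$ by $2$, Lemma~\ref{lem2:11} only places the two entries of each transposition in different components, so no bipartition is preserved and the sign argument is lost. Concretely, in \v{S}ir\'a\v{n}'s example $\cS(\Sigma)=\{1,3,4,\ldots,9\}$ and $f=10-\gamma^E$. An adjacent-exchange family joining an embedding of Euler genus $1$ to one of Euler genus $4$ has largest face numbers $9$ and $6$ in the two parity classes. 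This asymmetry between the two ends is exactly what the theorem is about. Besides, such a statement would bound $\max I^+$ from below, whereas you need an upper bound on $\min I^+$.

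The missing observation is that Lemma~\ref{lem4:1} itself, applied to $\mathcal R^+$, settles your remaining case at once. That family contains $I^+\neq\varnothing$ and also the parity-$(1-i)$ value $f(Q_{A^*})\le\max I^-+1$, so both of its parity classes are nonempty. Write $m^+_{1-i}$ for its least face number of parity $1-i$. Then
\[
\min I^+\le m^+_{1-i}+1\le f(Q_{A^*})+1\le \max I^-+2,
\]
which contradicts the assumed gap. The gap $\max I^++2<\min I^-$ is handled symmetrically, using Lemma~\ref{lem4:1} on $\mathcal R^-$.

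The paper runs the same argument from a minimizer $A'$ of $I^-$ instead of a maximizer. It shows $\min I^+\le\min I^-+2$: directly if $f(Q_{A'\cup\{p\}})$ has parity $i$, and via Lemma~\ref{lem4:1} on the $p$-performed family otherwise. Symmetrically $\min I^-\le\min I^++2$. It then uses that two step-two intervals of the same parity whose minima differ by at most $2$ have a gap-free union.
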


\begin{proof}
We prove the statement by induction on
$\left|\mathcal P\setminus(\mathcal P_0\cup\mathcal P_1)\right|$,
i.e., the number of unfixed choices.

If $\mathcal P\setminus(\mathcal P_0\cup\mathcal P_1)=\varnothing$, then all the elements of $\mathcal P$ are fixed, so there is exactly one valid set, namely $A=\mathcal P_1$. Hence the restricted family contains only one bi-rotation system $Q_{\mathcal P_1}$. For each $i\in\{0,1\}$, the set $\mathcal F_i^{\mathrm{res}}$ is therefore either empty or  a set of one-element ${f(Q_{\mathcal P_1})}$. Since  singleton set clearly has no gaps. Thus the result holds.

Suppose now that $\mathcal P\setminus(\mathcal P_0\cup\mathcal P_1)\neq\varnothing$, and assume that the result holds whenever the number of unfixed exchange operations is smaller. Choose $p\in\mathcal P\setminus(\mathcal P_0\cup\mathcal P_1)$.

Since $p$ is unfixed, a valid set $A$ may either contain $p$ or omit $p$. We therefore divide the valid sets into two families. In the first family, $p\notin A$, so $p$ is fixed to be omitted; equivalently, $\mathcal P_0$ is replaced by $\mathcal P_0\cup\{p\}$. In the second family, $p\in A$, so $p$ is fixed to be performed; equivalently, $\mathcal P_1$ is replaced by $\mathcal P_1\cup\{p\}$. In either case, the number of unfixed exchange operations decreases by one.

For $i\in\{0,1\}$, let $\mathcal F_i^-$ be the set of face numbers of parity $i$ arising from valid sets with $p\notin A$, and let $\mathcal F_i^+$ be the corresponding set arising from valid sets with $p\in A$. Explicitly,
\[
\mathcal F_i^-= \bigl\{f(Q_A):A\text{ is valid},\ p\notin A,\ f(Q_A)\equiv i\pmod 2\bigr\}
\]
and
\[
\mathcal F_i^+= \bigl\{f(Q_A):A\text{ is valid},\ p\in A,\ f(Q_A)\equiv i\pmod 2\bigr\}.
\]

Each of these two families has one fewer unfixed exchange operation. Hence, by the induction hypothesis, whenever nonempty,  $$\mathcal F_i^-= \intervaltwo{\min\mathcal F_i^-}{\max\mathcal F_i^-}~~~ \mbox{and}~~~\mathcal F_i^+= \intervaltwo{\min\mathcal F_i^+}{\max\mathcal F_i^+}.$$

If one of $\mathcal F_i^-$ and $\mathcal F_i^+$ is empty, then $\mathcal F_i^{\mathrm{res}}=\mathcal F_i^-\cup\mathcal F_i^+$ is equal to the other one, and the result follows from the induction hypothesis. Therefore, we suppose that both $\mathcal F_i^-$ and $\mathcal F_i^+$ are nonempty. Let $a_i=\min\mathcal F_i^-$ and $b_i=\min\mathcal F_i^+$.

Choose a valid set $A'$ with $p\notin A'$ and
$f(Q_{A'})=a_i$, and let $A''=A'\cup{p}$. Since $p$ is not
fixed, $A''$ is also valid by Proposition \ref{prop4:1}. Moreover,
$A'\triangle A''={p}$. By
Corollary~\ref{cor2:15},
$|f(Q_{A''})-a_i|\leq2$.

If $f(Q_{A''})\equiv i\pmod{2}$, then, by the definition of $b_i$,
$b_i\leq f(Q_{A''})\leq a_i+2$.

Suppose instead that
$f(Q_{A''})\equiv1-i\pmod{2}$. Since $f(Q_{A''})$ and $a_i$ have
opposite parity and differ by at most $2$, we have
$|f(Q_{A''})-a_i|=1$. Hence
$f(Q_{A''})\leq a_i+1$.

Let $b_{1-i}$ be the minimum face number of parity $1-i$ among the
valid sets with $p\in A$. This value exists because $Q_{A''}$ has
the parity $1-i$. Since the second family also contains a face number of
parity $i$, both parity classes are nonempty in this restricted
family. Therefore, by Lemma~\ref{lem4:1},
$|b_i-b_{1-i}|=1$. Hence
$b_i\leq b_{1-i}+1\leq f(Q_{A''})+1\leq a_i+2$.

Interchanging the roles of the two families gives
$a_i\leq b_i+2$. Thus
$|a_i-b_i|\leq2$. Since $a_i$ and $b_i$ have the same parity,
$|a_i-b_i|\in\{0,2\}$.

 Assume, without loss of generality, that $a_i\leq b_i$. Then either
$b_i=a_i$ or $b_i=a_i+2$. Hence the two step-two intervals
$\mathcal F_i^-$ and $\mathcal F_i^+$ either have the same minimum or
their minimum values are consecutive integers of parity $i$. Therefore,
their union
$\mathcal F_i^-\cup\mathcal F_i^+=\mathcal F_i^{\mathrm{res}}$
contains every integer of parity $i$ between its minimum and maximum
values.

The proof is complete.
\end{proof}

\begin{proof}[\bf{Proof of Theorem~\ref{t1:4}}]
Choose $Q_m,Q_M\in\mathcal B(\mathbf{\Pi})$ such that
$f(Q_m)=m_i$ and $f(Q_M)=M_i$. By Lemma~\ref{lem2:7}, there exists
an ordered adjacent-exchange family
$\{Q_A:A\subseteq\mathcal P\}$ with
$Q_\varnothing=Q_m$ and $Q_{\mathcal P}=Q_M$.

Applying Lemma~\ref{lem4:2} with
$\mathcal P_0=\mathcal P_1=\varnothing$, every integer of parity $i$
between $m_i$ and $M_i$ is attained by some $Q_A$. Hence
$\intervaltwo{m_i}{M_i}\subseteq\mathcal F_i$.  Conversely, every element of \(\mathcal F_i\) has parity \(i\) and lies between \(m_i\) and \(M_i\). Hence \(\mathcal F_i\subseteq\intervaltwo{m_i}{M_i}\).Therefore,
$\mathcal F_i=\intervaltwo{m_i}{M_i}$.

Theorem~\ref{t1:4} now follows from Remark~\ref{re2:16}.
\end{proof}

\section{Conclusions}\label{Sec:5}

We conclude with two conjectures concerning the shape of the Euler-genus distribution of a signed graph. Let $\Sigma$ be a signed graph, and let $g_k(\Sigma)$ denote the number of embeddings of $\Sigma$ having Euler-genus $k$. The sequence
$g_0(\Sigma),g_1(\Sigma),\ldots,g_{\gamma_M^E(\Sigma)}(\Sigma)$
is called the \emph{Euler-genus distribution} of $\Sigma$.

Recall that a finite sequence of nonnegative numbers
$b_0,b_1,\ldots,b_r$ is \emph{unimodal} if there exists an index $j$
such that
$$b_0\leq b_1\leq\cdots\leq b_j\geq b_{j+1}\geq\cdots\geq b_r,$$ and is {\it log-concave} if
$$b_k^2\geq b_{k-1}b_{k+1}~~\mbox{for}~~1\leq k\leq r-1.$$

For the  shape of genus distributions  of graphs, Gross, Robbins, and Tucker
\cite{GrossRobbinsTucker1989} conjectured that the genus distribution
of every graph is log-concave. This long-standing conjecture was
recently disproved by Mohar \cite{Mohar2026} with counterexamples. However, these counterexamples are still unimodal. The conjecture on the unimodality
of the genus distribution of graphs is still open \cite{Mohar2026}.

In the signed setting, by computing the Euler-genus distributions of some signed graphs, we found that there exist some signed graphs for which, even after removing the zero values from the Euler-genus distribution, the remaining sequence is still not unimodal. Meanwhile, from the results of this paper, we know that both the even-indexed subsequence $
\left(g_{2j}(\Sigma)\right)_
{j\geq 0}
$ and the odd-indexed subsequence $
\left(g_{2j+1}(\Sigma)\right)_
{j\geq 0}
$ are gap-free, that is, there are no zeros in the interior of the sequences. Therefore, we propose the following two conjectures. We also note that two conjectures are equivalent when both   even-indexed subsequence and odd-indexed subsequence are nonempty, that is, if one holds then so does the other.

\begin{conjecture}
For every signed graph $\Sigma$, the even-indexed subsequence
$
\left(g_{2j}(\Sigma)\right)_
{j\geq 0}
$
of its Euler-genus distribution is unimodal.
\end{conjecture}

\begin{conjecture}
For every signed graph $\Sigma$, the odd-indexed subsequence
$
\left(g_{2j+1}(\Sigma)\right)_
{j\geq 0}
$
of its Euler-genus distribution is unimodal.
\end{conjecture}

\section*{Acknowledgements}

Yichao Chen is supported by National Natural Science Foundation of China  (No. 12271392). Yan Yang is supported by
National Natural Science Foundation of China (No. 12371350).

\end{document}